\newtheorem{theorem}{Theorem}[section]
\newtheorem{corollary}[theorem]{Corollary}
\theoremstyle{definition}
\newtheorem{definition}[theorem]{Definition}
\theoremstyle{remark}
\newtheorem{remark}[theorem]{Remark}
\numberwithin{equation}{section}
\renewenvironment {proof} {\begin{trivlist} \item[\hspace{\labelsep}%
\sc Proof.]}{$\Box$ \end{trivlist}}
   \def\cN{{\mathcal N}}
\renewcommand {\phi}{\varphi}          
\newcommand {\supp}{\mathop {\rm supp}}     
\begin{document}

\title[Multidimensional moment problem  and Stieltjes transform]
{Multidimensional moment problem  and Stieltjes transform}

\author[I. Kovalyov]{Ivan Kovalyov}
\address{Universität Osnabrück\\
Albrechtstr. 28a  \\
49076 Osnabrück \\
Germany.}
\email{i.m.kovalyov@gmail.com}

\thanks{I. Kovalyov also gratefully acknowledges financial support by the German Research Foundation (DFG, grant 520952250).}

\subjclass[2010]{Primary 30E05 ; Secondary 44A60; 30B70 ; 14P99}
\keywords{Continued fractions, multidimensional moment problem, Schur algorithm.}

\begin{abstract} The truncated multidimensional moment problem is studied in terms of the Stieltjes transform as the interpolation problem. A step-by-step algorithm is constructed for the multidimensional moment problem and the set of solutions is found in terms of  continued fractions.

\end{abstract}

\maketitle
\tableofcontents
\section{Introduction}
 Let $\mu$ be a nonnegative Borel measure on $\mathbb{R}^n$, where $\supp(\mu)=A\subseteq \mathbb{R}^n$. The moment sequence 
 $\mathbf{s}=\{s_{i_1,\ldots,i_n}\}_{i_1,\ldots, i_n=0}^\ell$  is defined by
\begin{equation}\label{14.2.1}
s_{i_1,...,i_n}=\int_{\mathbb{R}^n} t_1^{i_1}\times\ldots\times  t_n^{i_n}d\mu(t_1,...,t_n).
\end{equation}

On the other hand, we can reformulated \eqref{14.2.1} in terms of the Stieltjes transform as an interpolation problem.
Recall  a $n-$variate Stieltjes transform (see \cite{Cuyt2004})
\begin{equation}\label{20.int. st tr.1}
\int_{\mathbb{R}^n}\cfrac{d\mu(t_1,t_2,..., t_n)}{1+ \sum\limits_{i=1}^n t_i\tilde{z}_i}.
\end{equation}

Setting $\tilde z_i=-\cfrac{1}{z_i}$, we define an  associated function $F$ by
\begin{equation}\label{20.2.2}
 F(z_1,z_2,...,z_n)=-\frac{1}{\prod\limits_{i=1}^n z_i}\int_{\mathbb{R}^n}\cfrac{d\mu(t_1,t_2,..., t_n)}{1- \sum\limits_{i=1}^n \cfrac{t_i}{z_i}}.
\end{equation}   

\begin{definition}(\cite{Luger19}) A Stolz domain with centre $t_0\in \mathbb{R}$ and angle $\theta\in(0,\frac{\pi}{2}]$ in the set

\[
\{z\in \mathbb{C}^+ |\theta<\arg( z-t_0)<\pi-\theta\}
\]
The symbol   $z\widehat{\rightarrow}t_0$ denotes the limit $z{\rightarrow}t_0$ in any Stolz domain with centre $t_0$ and the symbol $z\widehat{\rightarrow}\infty$  denotes the limit
$|z|{\rightarrow}\infty$ in any Stolz domain with centre $0$.
\end{definition}

Moreover, \eqref{20.2.2} admits the following asymptotic expansion
\begin{equation}\label{20.2.3}\begin{split}
    F(z_1,...,z_n)&=-\frac{1}{\prod\limits_{i=1}^n z_i}\int_{\mathbb{R}^n}\cfrac{d\mu(t_1,t_2,..., t_n)}{1- \sum\limits_{i=1}^n \cfrac{t_i}{z_i}}=
    \\&=-\frac{1}{\prod\limits_{i=1}^n z_i} \int_{\mathbb{R}^n}\sum_{k=0}^\infty\left( \sum\limits_{i=1}^n \cfrac{t_i}{z_i}\right)^k d\mu(t_1,t_2,..., t_n)=\\&=
    -\int_{\mathbb{R}^n} \sum_{k=0}^\infty\!\! \!\!\!\sum_{\begin{matrix} 
  \alpha_i^k\geq0  \\
  \alpha_1^k+\ldots \alpha_n^k=k \\
   \end{matrix} }\!\!\!\!\!\!\!\!\!\!\binom{k}{\alpha_1^k, \ldots, \alpha_n^k}\cfrac{t_1^{\alpha_1^k}t_2^{\alpha_2^k}\ldots t_n^{\alpha_n^k}d\mu(t_1,t_2,..., t_n)}{z_1^{\alpha_1^k+1}z_2^{\alpha_2^k+1}\ldots z_n^{\alpha_n^k+1}}=\\&=
    -\sum_{k=0}^\infty \!\!\!\sum_{\begin{matrix} 
  \alpha_i^k\geq0  \\
  \alpha_1^k+\ldots \alpha_n^k=k \\
   \end{matrix} }\!\!\!\!\!\binom{k}{\alpha_1^k, \ldots, \alpha_n^k}\cfrac{s_{\alpha_1^k,\alpha_2^k,\ldots, \alpha_n^k}}{z_1^{\alpha_1^k+1}z_2^{\alpha_2^k+1}\ldots z_n^{\alpha_n^k+1}}, 
\quad z_i\widehat{\rightarrow}\infty.
\end{split}
\end{equation}

However, we can reformulate the moment problem \eqref{14.2.1}  in terms of the generalized moments. Define a linear functional $\mathfrak{S}$  on the monomials $t_1^{\alpha_1}\times\ldots\times  t_n^{\alpha_n}$ by
\begin{equation}\label{14.2.3S}
	\mathfrak{S}(t_1^{\alpha_1}\times\ldots\times  t_n^{\alpha_n})=s_{\alpha_1,...,\alpha_n}.
\end{equation}

In the general case, the associated function can be defined by
\begin{equation}\label{21p.1.11s}
\begin{split}
    F(z_1,z_2,...,z_n)&=-\frac{1}{\prod\limits_{i=1}^n z_i} \mathfrak{S}\left(   \cfrac{1}{1- \sum\limits_{i=1}^n \cfrac{t_i}{z_i}}\right)=
   -\frac{1}{\prod\limits_{i=1}^n z_i}\mathfrak{S}\left(\sum_{k=0}^\infty\left( \sum\limits_{i=1}^n \cfrac{t_i}{z_i}\right)^k \right)=\\&=\!
    -\mathfrak{S}\!\!\left(  \sum_{k=0}^\infty \!\!\!\sum_{\begin{matrix} 
  \alpha_i^k\geq0  \\
  \alpha_1^k\!+\!\ldots\!+\! \alpha_n^k\!=\!k \\
   \end{matrix} }\!\!\! \binom{k}{\alpha_1^k, \ldots, \alpha_n^k}\!\cfrac{t_1^{\alpha_1^k}t_2^{\alpha_2^k}\ldots t_n^{\alpha_n^k}}{z_1^{\alpha_1^k+1}z_2^{\alpha_2^k+1}\ldots z_n^{\alpha_n^k+1}}
   \right)=\\&=
    -\sum_{k=0}^{n\times \ell}\!\!\! \sum_{\begin{matrix} 
  \alpha_i^k\geq0  \\
  \alpha_1^k\!+\ldots \!+\!\alpha_n^k=k \\
   \end{matrix}}\!\!\!\binom{k}{\alpha_1^k, \ldots, \alpha_n^k}\cfrac{s_{\alpha_1^k,\alpha_2^k,\ldots, \alpha_n^k}}{z_1^{\alpha_1^k+1}z_2^{\alpha_2^k+1}\ldots z_n^{\alpha_n^k+1}}+\\&  +o\!\left( \!\!\sum_{\begin{matrix} 
  \alpha_i^{n\times \ell}\geq0  \\
  \alpha_1^{n\!\times\! \ell}\!+\!\ldots\!+\! \alpha_n^{n\!\times\! \ell}\!=\!{n\!\times\! \ell} \\
   \end{matrix}}\cfrac{1}{z_1^{\alpha_1^{n\!\times\! \ell}+1}\ldots z_n^{\alpha_n^{n\!\times\! \ell}+1}}\right), \quad z_i\widehat{\rightarrow}\infty.\end{split}
\end{equation}

Furthermore, the Stieltjes transform can be applicable  to the truncated moment problem for atomic measure. Let  $\mu$ be an atomic measure on $\mathbb{R}^n$, where sup$(\mu)=A\subseteq \mathbb{R}^n$ and 
 \begin{equation}\label{24p*.2.1}
 	\mu(x)=\sum_{k=0}^Mm_k\delta_{x_k}(x), \quad m_k>0.
 \end{equation}
 In this case, the moment sequence 
 $\mathbf{s}=\{s_{i_1,\ldots,i_n}\}_{i_1,\ldots, i_n=0}^\ell$  is generated by
\begin{equation}\label{24q.2.1}
s_{i_1,...,i_n}=\int_{\mathbb{R}^n} x_1^{i_1}\times\ldots\times  x_n^{i_n}d\mu(x_1,...,x_n)=\sum_{k=0}^M m_k t_1^{i_1}\times\ldots\times  t_n^{i_n}
\end{equation}
and  the associated function takes the following asymptotic expansion 

\begin{equation}\label{20.2.3e}\begin{split}
    F(z_1,...,z_n)&=-\frac{1}{\prod\limits_{i=1}^n z_i}\int_{\mathbb{R}^n}\cfrac{d\mu(x_1,x_2,..., x_n)}{1- \sum\limits_{i=1}^n \cfrac{x_i}{z_i}}=
    \\& =-\sum_{k=0}^{n\times \ell}\!\!\! \!\!\!\sum_{\begin{matrix} 
  \alpha_i^k\geq0  \\
  \alpha_1^k\!+\ldots \!+\!\alpha_n^k=k \\
   \end{matrix}}\!\!\!\!\!\!\binom{k}{\alpha_1^k, \ldots, \alpha_n^k}\cfrac{s_{\alpha_1^k,\alpha_2^k,\ldots, \alpha_n^k}}{\prod\limits_{j=1}^nz_j^{\alpha_j^k+1}}+\\&  +o\!\left( \!\!\sum_{\begin{matrix} 
  \alpha_i^{n\times \ell}\geq0  \\
  \alpha_1^{n\!\times\! \ell}\!+\!\ldots\!+\! \alpha_n^{n\!\times\! \ell}\!=\!{n\!\times\! \ell} \\
   \end{matrix}}\cfrac{1}{\prod\limits_{k+1}^n z_k^{\alpha_k^{n\!\times\! \ell}+1}}\right), \quad z_i\widehat{\rightarrow}\infty.
   \end{split}
   \end{equation}

 \textbf{ The multidimensional  problem} $\bold{MP}(\mathbf{s},\ell)$ can be formulated as:
 
 Given a sequence of real numbers $\mathbf{s}=\{s_{i_1,i_2,...,i_n}\}_{i_1,...,i_n=0}^\ell$. Describe the set of function $F$ such that
\begin{equation}\label{20.2.3wq}\begin{split}
 	 F(z_1,z_2,...,z_n)&=  -\sum_{k=0}^{n\times\ell} \sum_{\begin{matrix} 
  \alpha_i^k\geq0  \\
  \alpha_1^k+\ldots \alpha_n^k=k \\
   \end{matrix} }\binom{k}{\alpha_1^k, \ldots, \alpha_n^k}\cfrac{s_{\alpha_1^k,\alpha_2^k,\ldots, \alpha_n^k}}{z_1^{\alpha_1^k+1}z_2^{\alpha_2^k+1}\ldots z_n^{\alpha_n^k+1}}+\\&
   +o\!\left( \!\!\sum_{\begin{matrix} 
  \alpha_i^{n\times \ell}\geq0  \\
  \alpha_1^{n\!\times\! \ell}\!+\!\ldots\!+\! \alpha_n^{n\!\times\! \ell}\!=\!{n\!\times\! \ell} \\
   \end{matrix}}\cfrac{1}{z_1^{\alpha_1^{n\!\times\! \ell}+1}\ldots z_n^{\alpha_n^{n\!\times\! \ell}+1}}\right), \quad z_i\widehat{\rightarrow}\infty.
\end{split}
 \end{equation}
 
 If $\ell$ is finite, then $\bold{MP}(\mathbf{s},\ell)$ is called a truncated problem, otherwise one is called a full problem.

 However, the sequence  $\mathbf{s}=\{s_{i_1,i_2,...,i_n}\}_{i_1,...,i_n=0}^\ell$ admits the following decomposition
 \begin{equation}\label{21.1.13s}
	 \mathbf{s}=\bigcup_{j_2,\ldots, j_n=0}^\ell  \mathbf{s}_{j_2,\ldots, j_n}, \quad \mbox{where}\quad \mathbf{s}_{j_2,\ldots, j_n}=\{s_{i,j_2,\ldots, j_n}\}_{i=0}^\ell, \quad j_2,\ldots, j_n=\overline{0,\ell}.
  \end{equation}
  
In addition, if we set
 \begin{equation}\label{21.1.13sw3}
	\mathfrak s_{i,j_2,\ldots, j_n}=\binom{i+j_2+\ldots+ j_n}{i,j_2, \ldots,j_n}s_{i,j_2,\ldots, j_n},
  \end{equation}
  then \eqref{20.2.3wq} can be rewritten as
  \begin{equation}\label{20.2.3wq1}\begin{split}
   	 F(z_1,z_2,...,z_n)&=  -\sum_{j_2,\ldots, j_n=0}^\ell \cfrac{1}{z_2^{j_2+1}\cdot\ldots\cdot z_n^{j_n+1}}F_{j_2,\ldots, j_n}(z_1),  
   \end{split}
 \end{equation}
  where   $F_{j_2,\ldots, j_n}$  admits the following asymptotic expansion
    \begin{equation}\label{20.2.3wq2}
    	F_{j_2,\ldots, j_n}(z_1)=-\cfrac{ \mathfrak{s}_{0,j_2,\ldots, j_n}}{z_1}-\cfrac{ \mathfrak{s}_{1,j_2,\ldots, j_n}}{z_1^2}-\ldots-\cfrac{ \mathfrak{s}_{\ell,j_2,\ldots, j_n}}{z_1^{\ell+1}}+o\left(\cfrac{1}{z_1^{\ell+1}}\right).
     \end{equation}
     
     $F_{j_2,\ldots, j_n}$   is called an  associated function of the  sequence $\mathfrak s_{j_2,\ldots, j_n}=\{ \mathfrak{s}_{i,j_2,\ldots, j_n}\}_{i=0}^\ell$.
 
 The representation \eqref{20.2.3wq1}  is more convenient in terms of    $F_{j_2,\ldots, j_n}$  functions.  In this case,  the multidimensional problem is rewritten as the union of  one-dimensional problems.  Hence the problem becomes simpler. 
  
 In Section 2, we recall the basic results of the one-dimensional moment problem, which will be applied to the multidimensional problem. In Section 2 and Section 3, we study truncated multidimensional moment problem in terms of the Stieltjes transform, the step-by-step algorithms are found. In this case, the solutions of the truncated moment problems can be represented in terms of \textbf{P}-fractions and  \textbf{S}-fractions.  Finally, in Section 4, we consider the full problem and describe the set of solutions. The description of solutions to the full problem is based on the results of the previous sections. 

 \section{Indefinite one-dimensional moment problem}
In this section, we recall some results of the one-dimensional moment problem in terms of the Stieltjes transform, which provide a basis for future results.

Let $\mathbf{s}=\{s_{j}\}_{j=0}^{\ell}$ be a sequence
of real numbers and let a functional $\mathfrak{S}$ be defined on
the linear space
$\mathcal{P}=span\left\{z^{j}: j=\overline{0,\ell}\right\}$
by 
\begin{equation}\label{P3.mom}s_j=\mathfrak{S}(z^{j}).\end{equation}

We consider the general case, i.e. the measure may not exist. Hence, the problem is studied as the interpolation problem. Let us define it:

 \textbf{Moment problem $MP(\mathbf{s},\ell)$}:
Given a sequence of real numbers $\mathbf{s}=\{s_{j}\}_{j=0}^{\ell}$. Find the set of function $F$, which admit the  asymptotic expansion 
\begin{equation}\label{14.int.th_1}
	 f(z)=-\frac{s_{0}}{z}-\frac{s_{1}}{z^2}-\cdots-\frac{s_{\ell}}{z^{\ell+1}}
    +o\left(\frac{1}{z^{\ell+1}}\right),\quad\quad z\widehat{\rightarrow}\infty.
\end{equation}

In addition, we  recall  a set $\mathcal{N}(\textbf{s})=\{n_{j}\}_{j=1}^{N}$ of
normal indices of  $\textbf{s}=\{s_j\}_{j=0}^{\ell}$, which  is defined by
\begin{equation}\label{3p.2.4}
\mathcal{N}(\textbf{s})=\{n_{j}: D_{n_{j}}\neq0, j=1,2,\ldots,
N\},\quad D_{n_{j}}:=\textup{det}({s}_{i+k})_{i,k=0}^{n_{j}-1}.
\end{equation}

So, let us formulate the main results of $MP(\mathbf{s},\ell)$. The following statement includes results in terms of \textbf{P}-fractions.

\begin{theorem}\label{14p.int.th_ind} (\cite{Der03}) Let $\mathbf{s}=\{s_{j}\}_{j=0}^{2n_N-1}$ be a sequence of real numbers and  let $\cN({\mathbf s})=\{n_j\}_{j=1}^N$ be the set of normal indices of  ${\mathbf
s}$. Then any solution of the moment problem $MP(\mathbf{s}, 2n_N-1)$ takes the form
\begin{equation}\label{14.int.th_2}
	f(z)=-\cfrac{b_0}{a_0(z)-\cfrac{b_1}{a_1(z)-\ldots-\cfrac{b_{N-1}}{a_{N-1}(z)+\tau(z)}}},
\end{equation}
where the prameter $\tau$ satisfies the following condition
\begin{equation}\label{14.int.th_3wx}
	\tau(z)=o(1),\quad z\widehat{\rightarrow}\infty,
\end{equation}
and atoms $(a_i,b_i)$ can be calculated by
\begin{equation}\label{14.int.th_3}
	\begin{split}
	&b_0=s _{n_1-1},\quad b_j=s^{(j)}_{\nu-1} \quad  \mbox{and}\quad
	 a_{j}(z)=\cfrac{1}{D^{(j-1)}_{\nu}} \begin{vmatrix}
	\mathfrak  s^{(j-1)}_{0}&\mathfrak  s^{( j-1)}_{1}& \ldots&  \mathfrak s^{(j-1)}_{\nu}\\
	  \ldots& \ldots&\ldots& \ldots\\
	  s^{(j-1)}_{\nu-1}& \mathfrak s^{( j-1)}_{\nu}& \ldots&\mathfrak  s^{( j-1)}_{2\nu-1}\\
	    1& z&\ldots& z^{\nu}
	 \end{vmatrix}
	 ,\\&
	s^{( j)}_i=\cfrac{(-1)^{i+\nu}}{\left(s^{(j-1)}_{\nu-1}\right)^{i+\nu+2}}\begin{vmatrix}
 s^{(j-1)}_{\nu}& s^{( j-1)}_{\nu-1} &0&\ldots& 0\\
\vdots& \ddots &\ddots&\ddots & \vdots\\
\vdots&  &\ddots&\ddots &0\\
\vdots&  & &\ddots &s^{(j_2,...,j_n, j-1)}_{\nu-1}\\
s^{(j-1)}_{2\nu+i}& \ldots &\ldots&\ldots& s^{( j-1)}_{\nu-1}
\end{vmatrix},
\end{split}\end{equation}
where $i=\overline{0,2\nu-1}$, $j=\overline {0, N-1}$, $s^{(0)}_{i}=s_{i}$,  $\nu=n_j-n_{j-1}$ and $n_{0}=0$.

Furthermore, \eqref{14.int.th_2} can be represented by
\begin{equation}\label{14.int.th_3,444r}
 f(z)=-\cfrac{Q_{n_{N-1}}(z)\tau(z)+Q_{n_N}(z)}{P_{n_{N-1}}(z)\tau(z)+P_{n_N}(z)},
\end{equation}
where $P_{n_j}$ and $Q_{n_j}$ is called the polynomials of the first and second kind, respectively, which can be defined as the solutions of the following system
\begin{equation}\label{system1}
{b}_{j}y_{{n}_{j-1}}(z)-a_{j}(z)y_{{n}_{j}}(z)+y_{{n}_{j+1}}(z)=0\end{equation}
subject to the initial conditions
\begin{equation}\label{system1.1} P_{{n}_{-1}}(z)\equiv0,\mbox{ }P_{{n}_{0}}(z)\equiv1,\mbox{ }
Q_{{n}_{-1}}(z)\equiv-1,
\mbox{ }Q_{{n}_{0}}(z)\equiv0.\end{equation}
\end{theorem}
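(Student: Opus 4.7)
The plan is to prove Theorem \ref{14p.int.th_ind} by a Schur-type peeling algorithm that extracts one atom $(a_j,b_j)$ of the P-fraction at each step. Starting from $f_0 = f$ with the prescribed asymptotic expansion \eqref{14.int.th_1}, I would show that the first normal index $n_1$ forces a representation
\[
 f_0(z) = -\cfrac{b_0}{a_0(z) - f_1(z)},
\]
where $a_0$ is a real polynomial of degree $\nu = n_1$, $b_0 = s_{n_1-1}$, and $f_1$ is a new function satisfying an analogous asymptotic expansion with transformed moments $s^{(1)}_i$. The polynomial $a_0$ is characterized as the unique polynomial of degree $\nu$ such that $a_0(z) + b_0/f_0(z)$ vanishes to the maximal order at infinity; Cramer's rule applied to the linear system for its coefficients then produces the determinantal formula in \eqref{14.int.th_3} with $j=0$.

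Having extracted the atom $(a_0,b_0)$, the next step is to identify the transformed moments $s^{(1)}_i$ explicitly and show they have the Hankel structure advertised in the second displayed formula of \eqref{14.int.th_3}. This is the bookkeeping heart of the proof: one writes the Schur transformation as a formal operation on Laurent series, extracts the coefficients of $z^{-i-1}$ in $f_1$, and applies the Jacobi (Sylvester) identity for bordered Hankel determinants. It is at this point that the normal-index hypothesis is used essentially: the non-vanishing of $D_{n_j}$ guarantees the invertibility needed to carry out the Schur step and to preserve the Hankel form of the coefficient sequence. I expect this to be the main technical obstacle, since the determinantal manipulations must be tracked carefully to verify both the recursion for $s^{(j)}_i$ and the correct degree of the next atom $a_{j+1}$.

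Iterating the Schur step $N$ times exhausts the prescribed moments $s_0,\ldots,s_{2n_N-1}$ and produces the finite P-fraction in \eqref{14.int.th_2} with remainder $\tau = f_N$. The condition $\tau(z) = o(1)$ as $z\widehat{\rightarrow}\infty$ is equivalent to saying that $f_N$ has no polynomial part and its Laurent expansion at infinity begins at order at least $z^{-1}$, which is exactly what is needed for the resulting P-fraction to reproduce the asymptotic expansion \eqref{14.int.th_1}. Conversely, by induction on $N$, every choice of parameter $\tau$ with $\tau(z)=o(1)$ plugged into \eqref{14.int.th_2} produces a function whose asymptotic expansion agrees with $\mathbf{s}$ through order $2n_N-1$; thus the formula describes all solutions.

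Finally, the linear fractional representation \eqref{14.int.th_3,444r} follows from the three-term recurrence \eqref{system1} with initial conditions \eqref{system1.1} by the classical computation that expresses a continued fraction as the ratio of its numerator and denominator convergents. An inductive argument based on the recurrence shows that truncating \eqref{14.int.th_2} at level $N$ and inserting $\tau$ at the bottom is equivalent to forming $-(Q_{n_{N-1}}\tau + Q_{n_N})/(P_{n_{N-1}}\tau + P_{n_N})$, which closes the proof.
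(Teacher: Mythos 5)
The paper does not actually prove this statement: Theorem~\ref{14p.int.th_ind} is recalled verbatim as a known result and attributed to \cite{Der03}, so there is no internal argument to compare yours against. Your plan — a step-by-step Schur/P-fraction algorithm that at stage $j$ writes $f_{j}=-b_j/(a_j-f_{j+1})$, identifies $a_j$ as the polynomial part of $-b_j/f_j$ of degree $\nu=n_{j+1}-n_j$ forced by the normal-index structure, obtains the determinant formula for $a_j$ by Cramer's rule, and finishes with the standard convergent recurrence \eqref{system1}--\eqref{system1.1} to get \eqref{14.int.th_3,444r} — is exactly the route of the cited source, so in spirit you are reconstructing the reference rather than diverging from the paper.

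That said, as written your proposal defers rather than settles the decisive points. First, the explicit formula for the transformed moments $s^{(j)}_i$ in \eqref{14.int.th_3} is not a bordered-Hankel (Jacobi/Sylvester) identity so much as the classical Hessenberg-determinant formula for the coefficients of a reciprocal Laurent series: one expands $f_{j+1}=a_j+b_j/f_j$ and inverts the series $f_j=-(s^{(j)}_{\nu-1}z^{-\nu}+\cdots)$, and the triangular-with-one-superdiagonal determinant in the statement is precisely what that inversion produces; you flag this computation as "the main technical obstacle" but do not carry it out, and you also need to verify along the way that the new sequence $\{s^{(j+1)}_i\}$ again has its first normal index equal to $n_{j+2}-n_{j+1}$ (this is where $D_{n_j}\neq 0$ enters, via the factorization of Hankel determinants of the transformed sequence). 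Second, the converse direction needs more than the one sentence you give it: one must check that the linear-fractional map $\tau\mapsto -(Q_{n_{N-1}}\tau+Q_{n_N})/(P_{n_{N-1}}\tau+P_{n_N})$ sends \emph{every} $\tau=o(1)$ to a function matching all of $s_0,\dots,s_{2n_N-1}$, which rests on the Liouville--Ostrogradskii-type identity $P_{n_{N-1}}Q_{n_N}-P_{n_N}Q_{n_{N-1}}=\mathrm{const}$ and degree estimates for $P_{n_j},Q_{n_j}$ that your sketch never states. So the plan is the right one, but the proof is incomplete exactly at the bookkeeping that makes the theorem nontrivial.
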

\begin{remark} We can rewrite \eqref{14.int.th_2} in  short form as follows
\[\underset{i=0}{\overset{N-1}{\mathbf{K}}}\left(-\cfrac{b_{i}}{a_{i}(z)}\right):=-\cfrac{b_0}{a_0(z)-\ldots-\cfrac{b_{N-1}}{a_{N-1}(z)+\tau(z)}}.
\]
\end{remark}

By \cite{DK15, DK17, K17}, the solutions of $MP(\mathbf{s},\ell)$ can be represented in terms of the  \textbf{S}--fractions. We consider the special case, where the sequence ${\mathbf s}=\{s_i\}_{i=0}^\ell$ is regular. Let us define it:

 \begin{definition} \label{def:3p.4.3}(\cite{DK15})
Let ${\mathbf s}=\{s_i\}_{i=0}^\ell$  be a sequence of real numbers and let $\mathcal{N}({\mathbf s})=\{n_j\}_{j=1}^N$ be the set of normal indices of ${\mathbf s}$.
A sequence ${\mathbf s}$ is called a regular,
if one of the following equivalent conditions  holds:
\begin{enumerate}
  \item [(1)] $P_{{n}_{j}}(0)\neq0$ for every $j\le N$;
  \item [(2)] $D_{{n}_{j}-1}^{+}\neq0$ for every $j\le N$;
  \item [(3)] $D_{{n}_{j}}^{+}\neq0$ for every $j\le N$, where $D^{+}_n=\mbox{det}(s_{i+j+1})_{i,j=0}^{n-1}$.
 \end{enumerate}
\end{definition}

 \begin{theorem}\label{2p.pr.alg1}(\cite{DK15,DK17})
Let ${\mathbf
s}=\{s_i\}_{i=0}^{2n_N-2}$ be a regular sequence   and  let
$\cN({\mathbf s})=\{n_j\}_{j=1}^N$ be the set of normal indices of ${\mathbf
s}$. Then any solution of  $MP(\mathbf{s}, 2n_N-2)$
 admits the  representation 
 \begin{equation}\label{5p.eq:3.6*9}
    f(z)= \frac{1}{\displaystyle -z m_{1}(z)+\frac{1}{\displaystyle
    l_{1}+\frac{1}{\displaystyle -zm_{2}(z)+\cdots+\frac{1}{-zm_{N}(z)+\frac{\displaystyle 1}{\displaystyle\tau(z)}}
    }}},
\end{equation}
 where the parameter $\tau$ satisfies the following
 \begin{equation}\label{eq:tau_j}
    \frac{\displaystyle1}{\displaystyle\tau(z)}=o(z),\quad
    z\widehat{\rightarrow}\infty.
\end{equation}
and atoms $(m_j, l_j)$ can be calculated by 
  \begin{equation}\label{eqimnm1:}\begin{split}&
       d_1=\cfrac{1}{b_0},\quad l_1=-\cfrac{1}{d_1a_0(0)},\quad
	d_j=\cfrac{1}{b_{j-1}\left(l_{j-1}\right)^2 d_{j-1}},\\&
	l_j=-\cfrac{l_{j-1}}{1+l_{j-1}d_ja_{j-1}(0)}\mbox{ and }
	m_j(z)=\cfrac{\left(a_{j-1}(z)-a_{j-1}(0)\right)d_j}{z},
\end{split}\end{equation}
where $d_j$ is the leading coefficient of the polynomial  $m_j$ and $\left(a_j,b_j\right)$ are defined by \eqref{14.int.th_3}.

Furthermore, \eqref{5p.eq:3.6*9} can be represented as 
 \begin{equation}\label{5p.eq:3.6*9x}
    f(z)=\frac{Q^{+}_{2N-1}(z)\tau(z)+Q^{+}_{2N-2}(z)}{P^{+}_{2N-1}(z)\tau(z)+P^{+}_{2N-2}(z)},
   \end{equation}
 where $P^{+}_{j}$ and $Q^{+}_j$ are called the Stieltjes polynomials of the first and second kind, respectively, which can be found as  solutions of the  system
 \begin{equation}\label{s4.3}
    \left\{
    \begin{array}{rcl}
    y_{2j}-y_{2j-2}=l_{j}(z)y_{2j-1},\\
    y_{2j+1}-y_{2j-1}=-zm_{j+1}(z)y_{2j},\\
    \end{array}\right.
\end{equation}
subject to the initial conditions
\begin{equation}\label{5p.eq:3.6*9xx}
   Q^{+}_{-1}(z)\equiv1, \quad Q^{+}_{0}(z)\equiv0; \qquad P^{+}_{-1}(z)\equiv0, \quad P^{+}_{0}(z)\equiv1.
\end{equation}

 \end{theorem}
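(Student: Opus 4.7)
The plan is to start from the P-fraction representation of Theorem~\ref{14p.int.th_ind} and transform it, via a sequence of classical equivalence operations on continued fractions, into the S-fraction \eqref{5p.eq:3.6*9}. The procedure is the standard ``extension'' of a J-fraction into an S-fraction, and the regularity hypothesis is precisely the non-degeneracy condition that makes this extension well defined.

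First, for each $j=0,\dots,N-1$ I would decompose the atom polynomial as $a_j(z)=a_j(0)+z\tilde a_j(z)$ with $\tilde a_j$ a polynomial. Using the three-term recurrence \eqref{system1} evaluated at $z=0$ together with the regularity condition $P_{n_j}(0)\neq 0$ (see Definition~\ref{def:3p.4.3}), I would verify that $a_j(0)\neq 0$ for every $j$, so that each reciprocal appearing in \eqref{eqimnm1:} is well defined. Then, working outward from the $j=0$ level of the P-fraction, I would carry out at each level the elementary two-step equivalence
\[
-\cfrac{b}{a(z)+\text{tail}} \;=\; \cfrac{1}{-zm(z)+\cfrac{1}{l+\text{new tail}}},
\]
which, written out, amounts to first multiplying numerator and denominator by a scaling $d$ (so as to turn the leading coefficient $-b$ into $1$), splitting off the constant part $-d\,a(0)$, and finally inverting that constant into a new reciprocal $1/l$. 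Matching the leading behaviour forces $m(z)=d\,\tilde a(z)$ and $l=-1/(d\,a(0))$, while re-expressing the remaining piece forces the next scaling to be $d'=1/(b\,l^{2}d)$; these are exactly the formulas \eqref{eqimnm1:}. Iterating $N$ times produces \eqref{5p.eq:3.6*9}. A direct check then shows that the parameter of the new continued fraction satisfies \eqref{eq:tau_j} if and only if the old one satisfies \eqref{14.int.th_3wx}, so Theorem~\ref{14p.int.th_ind} delivers the full set of solutions to $MP(\mathbf s,2n_N-2)$ in the claimed form.

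For the representation \eqref{5p.eq:3.6*9x}, I would observe that the system \eqref{s4.3} with initial data \eqref{5p.eq:3.6*9xx} is precisely the standard three-term recurrence satisfied by the numerators and denominators of the successive convergents of the S-fraction \eqref{5p.eq:3.6*9}. An induction, together with the Möbius linearisation rule for tails of continued fractions, then produces the fractional-linear formula with $(P^{+}_{2N-1},P^{+}_{2N-2},Q^{+}_{2N-1},Q^{+}_{2N-2})$ in the denominator and numerator.

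The principal obstacle is the algebraic bookkeeping inside the inductive step: at every level one must track how the elementary equivalence propagates through the remaining tail and verify that none of the denominators in \eqref{eqimnm1:} degenerates. The non-trivial point is the implication ``regularity $\Rightarrow$ all $d_j,l_j$ are finite and non-zero at every stage'', which is where the equivalent conditions of Definition~\ref{def:3p.4.3} (in particular $D^{+}_{n_j-1}\neq 0$) enter in an essential way; the rest of the argument is a careful but routine induction.
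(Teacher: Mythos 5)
The paper itself does not prove this theorem: it is quoted from \cite{DK15,DK17}, so the only comparison possible is with the method of those works, which run a step-by-step Schur algorithm directly on the data and obtain the connection formulas \eqref{eqimnm1:} by comparing the resulting \textbf{S}-fraction with the \textbf{P}-fraction algorithm. Your route (equivalence transformations turning the \textbf{P}-fraction of Theorem~\ref{14p.int.th_ind} into the \textbf{S}-fraction) is close in spirit to how \cite{DK17} links the two expansions, and your elementary two-step identity does reproduce \eqref{eqimnm1:} formally. However, two concrete steps fail as written. First, regularity does not give $a_j(0)\neq 0$ for all $j$: from \eqref{system1} one only gets $P_{n_1}(z)=a_0(z)$, so $a_0(0)\neq 0$; for $j\geq 1$ one can perfectly well have $a_j(0)=0$ with all $P_{n_j}(0)\neq 0$ (e.g.\ $P_{n_2}(0)=a_1(0)P_{n_1}(0)-b_1=-b_1\neq0$). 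What must be shown nondegenerate is not $a_{j-1}(0)$ but the quantities $1+l_{j-1}d_j a_{j-1}(0)$ and $l_{j-1}$ appearing in \eqref{eqimnm1:}, and tying their nonvanishing to $P_{n_j}(0)\neq0$ is exactly the inductive identity you defer to ``the principal obstacle''; as proposed, the one verification you do offer is false and the one that is needed is missing.

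Second, there is a mismatch of problems. Theorem~\ref{14p.int.th_ind} parametrizes the solutions of $MP(\mathbf{s},2n_N-1)$, which uses the moment $s_{2n_N-1}$ that is not part of the data of $MP(\mathbf{s},2n_N-2)$, and your claimed ``direct check'' that \eqref{eq:tau_j} holds if and only if \eqref{14.int.th_3wx} holds is not an equivalence. Carrying the last elementary step through gives $1/\tau_{\mathrm{new}}(z)=-d_N a_{N-1}(0)-d_N\tau_{\mathrm{old}}(z)$; thus $\tau_{\mathrm{old}}=o(1)$ yields $1/\tau_{\mathrm{new}}=O(1)$, but conversely $1/\tau_{\mathrm{new}}=o(z)$ only forces $\tau_{\mathrm{old}}=o(z)$, not $o(1)$. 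This reflects the fact that the solution set of the shorter problem $MP(\mathbf{s},2n_N-2)$ is strictly larger than what Theorem~\ref{14p.int.th_ind} parametrizes, so funnelling everything through that theorem cannot deliver all solutions in the class \eqref{eq:tau_j}. The cited proofs avoid this by performing the algorithm directly on the $2n_N-1$ given moments, with the final half-step producing the free parameter subject to $1/\tau(z)=o(z)$; your argument needs either that direct last step or a separate justification that every solution of $MP(\mathbf{s},2n_N-2)$ arises from the transformed tail class, which is precisely what is not established. The part of your proposal deriving \eqref{5p.eq:3.6*9x} from the recurrences \eqref{s4.3}--\eqref{5p.eq:3.6*9xx} is standard and unproblematic.
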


\begin{theorem}\label{14p.int.th_ind**}  Let ${\mathbf
s}=\{s_i\}_{i=0}^{2n_N-1}$ be a regular sequence  and  let
$\cN({\mathbf s})=\{n_j\}_{j=1}^N$ be the set of normal indices of  ${\mathbf
s}$. Then any solution of the moment problem $MP(\mathbf{s}, 2n_N-1)$
 admits the following representation 
 \begin{equation}\label{5p.eq:f1}
    f(z)= \frac{1}{\displaystyle -z m_{1}(z)+\frac{1}{\displaystyle
    l_{1}(z)+\cdots+\frac{1}{-zm_{N}(z)+\displaystyle\frac{1}{\displaystyle
    l_{N}(z)+\tau(z)}    }}},
\end{equation}
where the parameter $\tau$ satisfies the following condition
 \begin{equation}\label{eq:tau_j009}
    \tau(z)=o(1),\quad  z\widehat{\rightarrow}\infty,
\end{equation}
and atoms $(m_j, l_j)$ can befound by \eqref{eqimnm1:}.
\end{theorem}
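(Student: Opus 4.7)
The plan is to reduce Theorem \ref{14p.int.th_ind**} to Theorem \ref{14p.int.th_ind} via the same P-to-S passage that underlies Theorem \ref{2p.pr.alg1}, carried one atom further so as to absorb the extra moment $s_{2n_N-1}$.

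First I would take any $f$ that solves $MP(\mathbf{s},2n_N-1)$ and invoke Theorem \ref{14p.int.th_ind} to write it in the P-fraction form \eqref{14.int.th_2} with atoms $(a_j,b_j)$ determined by \eqref{14.int.th_3} and with a P-parameter $\widetilde\tau$ satisfying $\widetilde\tau(z)=o(1)$, $z\widehat{\rightarrow}\infty$. Since $\mathbf{s}$ is regular, condition (1) of Definition \ref{def:3p.4.3} together with the recursion \eqref{system1}--\eqref{system1.1} yields $a_{j-1}(0)\neq 0$ for every $j\le N$, which is exactly what is needed for the atoms $(m_j,l_j)$ in \eqref{eqimnm1:} to be well-defined.

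Next I would apply, level by level for $j=1,\ldots,N$, the elementary continued-fraction equivalence
\[
-\cfrac{b_{j-1}}{a_{j-1}(z)+X(z)}=\cfrac{1}{-z\,m_{j}(z)+\cfrac{1}{l_{j}+\widetilde{X}(z)}},
\]
in which $\widetilde{X}$ is a fractional-linear transform of $X$ whose explicit coefficients are dictated by the definitions in \eqref{eqimnm1:}. This is precisely the identity underpinning Theorem \ref{2p.pr.alg1}; there, however, it is invoked only through level $N-1$ and is terminated at $-z\,m_{N}(z)+1/\tau(z)$, because only $2n_N-2$ moments need to be matched. In the present setting, the extra moment $s_{2n_N-1}$ forces one more application of the equivalence at level $N$, producing the full S-structure $-z\,m_{N}(z)+1/(l_{N}+\tau(z))$ of \eqref{5p.eq:f1}.

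The main obstacle is to check that the asymptotic condition transports correctly through this last step. Concretely, one must verify that the M\"obius map connecting $\widetilde\tau$ and $\tau$ at the innermost level sends $\widetilde\tau(z)=o(1)$ to $\tau(z)=o(1)$ as $z\widehat{\rightarrow}\infty$, using $a_{N-1}(0)\neq 0$ and the explicit formulas of \eqref{eqimnm1:}. Once this is done, the converse inclusion is immediate by running the same equivalence backwards: any $f$ of the form \eqref{5p.eq:f1} with $\tau(z)=o(1)$ can be rewritten as \eqref{14.int.th_2} with a P-parameter of the same order, and hence solves $MP(\mathbf{s},2n_N-1)$ by Theorem \ref{14p.int.th_ind}. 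This gives both inclusions and completes the proof.
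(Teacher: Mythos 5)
The first thing to note is that the paper does not prove this statement at all: Theorem~\ref{14p.int.th_ind**} sits in the ``recall'' Section~2 next to Theorems~\ref{14p.int.th_ind} and~\ref{2p.pr.alg1} and is quoted from the one-dimensional literature (\cite{DK15,DK17,K17}), so your argument can only be judged on its own merits. The route you choose --- start from the \textbf{P}-fraction representation \eqref{14.int.th_2} supplied by Theorem~\ref{14p.int.th_ind}, convert it level by level into the \textbf{S}-fraction \eqref{5p.eq:f1} by equivalence transformations, and transport the parameter through a M\"obius map at the innermost level --- is indeed the standard mechanism behind those references, so the overall strategy is sound.

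However, as written the proof has concrete gaps. First, the claim that regularity ``yields $a_{j-1}(0)\neq 0$ for every $j\le N$'' is false: from \eqref{system1}--\eqref{system1.1} one has $P_{n_1}(0)=a_0(0)$, so only $a_0(0)\neq0$ follows, while for $j\ge2$ regularity controls $P_{n_j}(0)=a_{j-1}(0)P_{n_{j-1}}(0)-b_{j-1}P_{n_{j-2}}(0)$, which is perfectly compatible with $a_{j-1}(0)=0$. What the formulas \eqref{eqimnm1:} actually require is that the denominators $1+l_{j-1}d_j a_{j-1}(0)$ do not vanish, and a short computation (for instance $1+l_1d_2a_1(0)=-P_{n_2}(0)/b_1$, with the pattern persisting by induction) shows these are governed by $P_{n_j}(0)\neq0$; that, not $a_{j-1}(0)\neq0$, is where regularity enters, and the same correction applies to your final step, which you base on the unavailable $a_{N-1}(0)\neq0$. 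Second, the ``elementary continued-fraction equivalence'' you display is literally correct only at the first level; for $j\ge2$ the transformation must carry along the scaling produced at the previous level (this is exactly what the recursive definitions of $d_j$ and $l_j$ in \eqref{eqimnm1:} encode), so the identity with those specific atoms is not self-evident and needs to be established, e.g.\ by induction on convergents in the spirit of the formulas \eqref{20p.4.10}. Third, the verification that the innermost M\"obius map sends $\widetilde\tau(z)=o(1)$ to $\tau(z)=o(1)$ --- which is precisely what distinguishes the even case \eqref{5p.eq:f1} from Theorem~\ref{2p.pr.alg1} --- is named as ``the main obstacle'' but never carried out. Until these three points are filled in, the argument is a plausible outline rather than a proof.
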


 \section{Truncated problem and  \textbf{P}-fractions}
 
 In the present section, we study a truncated multidimensional moment problem in terms of the Stieltjes transform.  In this case, we find a description of the solutions  in terms of  \textbf{P}-fractions.

 \begin{theorem}\label{20p.th.3.1} Let $\mathbf{s}=\{s_{i_1,i_2,...,i_n}\}_{i_1,...,i_n=0}^{2\mathfrak{n}-1}$ be a sequence of real numbers and let $\mathfrak{n}$ be a normal index for all associated sequences $\mathfrak s_{j_2,\ldots, j_n}=\{ \mathfrak{s}_{i,j_2,\ldots, j_n}\}_{i=0}^{2\mathfrak{n}-1}$ defined by~\eqref{21.1.13sw3}. Then
 any solution of the  problem $\bold{MP}(\mathbf{s}, 2\mathfrak{n}-1)$ takes the following representation

  \begin{equation}\label{20p.3.1}
   F(z_1,z_2,...,z_n)=  \sum_{j_2,\ldots, j_n=0}^{2\mathfrak{n}-1} \cfrac{1}{z_2^{j_2+1}\cdot\ldots\cdot z_n^{j_n+1}}  \underset{i=0}{\overset{N_{j_2,...,j_n}-1}{\mathbf{K}}}\left(-\cfrac{b_{i}^{(j_2,...,j_n)}}{a_{i}^{(j_2,...,j_n)}(z_1)}\right),
 \end{equation}
 where the parameters $\tau_{j_2,...,j_n}$ satisfy the following
\begin{equation}\label{20p.3.2}
	\tau_{j_2,...,j_n}(z_1)=o(1).
\end{equation}
 The atoms $\left(a^{(j_2,...,j_n)}_i,b^{(j_2,...,j_n)}_i\right)$ can be found by
 \begin{equation}\label{20p.3.3}\begin{split}
	&b^{(j_2,...,j_n)}_0=\mathfrak s^{(j_2,...,j_n)}_{n^{(j_2,...,j_n)}_1-1}\quad  \mbox{and}\quad b^{(j_2,...,j_n)}_j=\mathfrak s^{(j_2,...,j_n, j)}_{n^{(j_2,...,j_n)}_{j}-n^{(j_2,...,j_n)}_{j-1}-1}\\&
	 a^{(j_2,...,j_n)}_{j}(z_1)=\cfrac{1}{D^{(j_2,...,j_n,j-1)}_{\nu^{(j_2,...,j_n)}}} \begin{vmatrix}
	\mathfrak  s^{(j_2,...,j_n, j-1)}_{0}&\mathfrak  s^{(j_2,...,j_n, j-1)}_{1}& \ldots&  \mathfrak s^{(j_2,...,j_n, j-1)}_{\nu}\\
	  \ldots& \ldots&\ldots& \ldots\\
	   \mathfrak s^{(j_2,...,j_n, j-1)}_{\nu^{(j_2,...,j_n)}-1}& \mathfrak s^{(j_2,...,j_n, j-1)}_{\nu^{(j_2,...,j_n)}}& \ldots&\mathfrak  s^{(j_2,...,j_n, j-1)}_{2\nu^{(j_2,...,j_n)}-1}\\
	    1& z_1&\ldots& z^{\nu^{(j_2,...,j_n)}}_1
	 \end{vmatrix}
	 ,\\&
	\mathfrak s^{(j_2,...,j_n, j)}_i=\cfrac{(-1)^{i+\nu}}{\left(\mathfrak s^{(j_2,...,j_n, j-1)}_{\nu^{(j_2,...,j_n)}-1}\right)^{i+\nu^{(j_2,...,j_n)}+2}}\times\\&\quad\quad\quad\qquad\times\begin{vmatrix}
\mathfrak s^{(j_2,...,j_n, j-1)}_{\nu^{(j_2,...,j_n)}}& \mathfrak s^{(j_2,...,j_n, j-1)}_{\nu^{(j_2,...,j_n)}-1} &0&\ldots& 0\\
\vdots& \ddots &\ddots&\ddots & \vdots\\
\vdots&  &\ddots&\ddots &0\\
\vdots&  & &\ddots &\mathfrak  s^{(j_2,...,j_n, j-1)}_{\nu^{j_2,...,j_n}-1}\\
\mathfrak s^{(j_2,...,j_n, j-1)}_{2\nu^{(j_2,...,j_n)}+i}& \ldots &\ldots&\ldots& \mathfrak s^{(j_2,...,j_n, j-1)}_{\nu^{(j_2,...,j_n)}-1}
\end{vmatrix},
\end{split}
\end{equation}
where $i=\overline{0,2n^{(j_2,...,j_n)}_{N_{j_2,...,j_n}}-2n^{(j_2,...,j_n)}_j-1}$, $j=\overline {0, N_{j_2,...,j_n}}$, $\mathfrak s^{(j_2,...,j_n, 0)}_{\nu_{j_2,...,j_n}}=\mathfrak s^{(j_2,...,j_n)}_{\nu_{j_2,...,j_n}}$,  $\nu_{j_2,...,j_n}=n^{(j_2,...,j_n)}_j-n^{(j_2,...,j_n)}_{j-1}$ and $n^{(j_2,...,j_n)}_{0}=0$, $\mathcal{N}^{(j_2,...,j_n)}\left(\textbf{s}^{^{(j_2,...,j_n)}}\right)=\{n^{(j_2,...,j_n)}_{j}\}_{j=1}^{N_{j_2,...,j_n}}$ is the set of normal indices  of the sequence $\textbf{s}^{^{(j_2,...,j_n)}}$ and $n_{N_{j_2,...,j_n}}^{(j_2,...,j_n)}=\mathfrak{n}$.

 Furthermore,  \eqref{20p.3.1} can be rewritten in terms of  polynomials of the first and second kind as
  \begin{equation}\label{20p.3.4}
  	  F(z_1,...,z_n)=\!\! -\!\!\!\!\sum_{j_2,\ldots, j_n=0}^{2\mathfrak{n}-1} \cfrac{1}{\prod\limits_{i =2}^n\!z_i^{j_i+1} } 
	  \cfrac{Q^{({j_2,...,j_n)}}_{n_{N_{j_2,...,j_n}-1}}(z_1)\tau_{j_2,...,j_n}(z_1)+Q^{({j_2,...,j_n)}}_{n_{N_{j_2,...,j_n}}}(z_1)}{P^{({j_2,...,j_n)}}_{n_{N_{j_2,...,j_n}-1}}(z_1)\tau_{j_2,...,j_n}(z_1)+P^{({j_2,...,j_n)}}_{n_{N_{j_2,...,j_n}}}(z_1)},
  \end{equation}
  where $P^{({j_2,...,j_n)}}_{j}$ and $Q^{({j_2,...,j_n)}}_{j}$ are the polynomials of the first and second kind associated with the sequence $\mathfrak s_{j_2,\ldots, j_n}=\{ \mathfrak{s}_{i,j_2,\ldots, j_n}\}_{i=0}^{2\mathfrak{n}-1}$, respectively,   $P^{({j_2,...,j_n)}}_{j}$ and $Q^{({j_2,...,j_n)}}_{j}$ can be found as solutions of the following system
    \begin{equation}\label{20p.3.5}\begin{split}&
    	b_j^{(j_2,...,j_n)}y_{n_{j-1}}^{(j_2,...,j_n)}(z_1)-a_{j}^{(j_2,...,j_n)}(z_1)y_{n_j}^{(j_2,...,j_n)}(z_1)+y_{n_{j+1}}^{(j_2,...,j_n)}(z_1)=0,\\&
	P^{({j_2,...,j_n)}}_{-1}(z_1)\equiv0,\quad\mbox{and}\quad  P^{({j_2,...,j_n)}}_{0}(z_1)\equiv1,\\&
	Q^{({j_2,...,j_n)}}_{-1}(z_1)\equiv-1\quad\mbox{and}\quad  Q^{({j_2,...,j_n)}}_{0}(z_1)\equiv0.
 \end{split}  \end{equation}
 \end{theorem}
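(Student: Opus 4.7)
The plan is to reduce the multidimensional problem to a family of scalar problems via the decomposition~\eqref{20.2.3wq1}, and then invoke Theorem~\ref{14p.int.th_ind} on each slice.

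First, starting from the asymptotic condition~\eqref{20.2.3wq} defining a solution $F$ of $\bold{MP}(\mathbf{s},2\mathfrak{n}-1)$, I would use~\eqref{21.1.13sw3} to rewrite the multinomial coefficients inside the sum so that the coefficient of $z_1^{-i-1}z_2^{-j_2-1}\cdots z_n^{-j_n-1}$ becomes $-\mathfrak{s}_{i,j_2,\ldots,j_n}$. Grouping the terms of fixed $(j_2,\ldots,j_n)$ and factoring out the monomial $1/(z_2^{j_2+1}\cdots z_n^{j_n+1})$ produces the decomposition~\eqref{20.2.3wq1}, in which each component $F_{j_2,\ldots,j_n}(z_1)$ inherits the one-variable expansion~\eqref{20.2.3wq2}. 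The key point here is that the monomials $\{z_2^{-j_2-1}\cdots z_n^{-j_n-1}\}_{0\le j_i\le 2\mathfrak{n}-1}$ are linearly independent as asymptotic symbols under $z_i\widehat{\rightarrow}\infty$, so that the multivariate condition~\eqref{20.2.3wq} is equivalent to the conjunction of the scalar conditions~\eqref{20.2.3wq2} for every multi-index $(j_2,\ldots,j_n)$.

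Second, for each fixed $(j_2,\ldots,j_n)$, I would regard $F_{j_2,\ldots,j_n}$ as a solution of the one-dimensional problem $MP(\mathfrak{s}_{j_2,\ldots,j_n},2\mathfrak{n}-1)$. Since by hypothesis $\mathfrak{n}$ is a normal index of each sequence $\mathfrak{s}_{j_2,\ldots,j_n}$, the last normal index satisfies $n_{N_{j_2,\ldots,j_n}}^{(j_2,\ldots,j_n)}=\mathfrak{n}$, and Theorem~\ref{14p.int.th_ind} applies verbatim: $F_{j_2,\ldots,j_n}$ admits the P-fraction representation~\eqref{14.int.th_2} with atoms computed from~\eqref{14.int.th_3} applied to $\mathfrak{s}_{j_2,\ldots,j_n}$, which is precisely~\eqref{20p.3.3}, and with free parameter $\tau_{j_2,\ldots,j_n}$ satisfying~\eqref{20p.3.2}. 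Reassembling these continued fractions according to~\eqref{20.2.3wq1} yields~\eqref{20p.3.1}. Rewriting each continued fraction via~\eqref{14.int.th_3,444r} in terms of its polynomials of the first and second kind then gives~\eqref{20p.3.4}, and transcribing the recurrence~\eqref{system1}--\eqref{system1.1} with atoms adorned by the superscript $(j_2,\ldots,j_n)$ gives~\eqref{20p.3.5}.

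The main obstacle I anticipate is in the first step: one must verify carefully that the joint remainder in~\eqref{20.2.3wq}, which is an $o$ of a sum of monomials of total degree $n(2\mathfrak{n}-1)$, is exactly what is produced by multiplying the scalar remainders $o(z_1^{-2\mathfrak{n}})$ in~\eqref{20.2.3wq2} by the prefactors $1/(z_2^{j_2+1}\cdots z_n^{j_n+1})$ and summing, so that no slice-to-slice interference is overlooked and so that the truncation level in each one-dimensional subproblem comes out to be exactly $2\mathfrak{n}-1$. The normality hypothesis on $\mathfrak{n}$ for every slice is precisely what brings the scalar data into the hypotheses of Theorem~\ref{14p.int.th_ind}. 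Once this bookkeeping is in place, the rest is a purely mechanical transcription of the one-dimensional P-fraction formulas to each $(j_2,\ldots,j_n)$-slice, and the conclusions~\eqref{20p.3.1}, \eqref{20p.3.3}, \eqref{20p.3.4}, \eqref{20p.3.5} follow without further work.
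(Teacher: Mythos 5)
Your proposal follows essentially the same route as the paper: decompose the solution via~\eqref{20.2.3wq1} into the slice functions $F_{j_2,\ldots,j_n}(z_1)$ with expansions~\eqref{20.2.3wq2}, apply the one-dimensional Theorem~\ref{14p.int.th_ind} to each slice to get the P-fraction~\eqref{20p.3.1} with atoms~\eqref{20p.3.3}, and then use~\eqref{14.int.th_3,444r} together with~\eqref{system1}--\eqref{system1.1} to obtain~\eqref{20p.3.4}--\eqref{20p.3.5}. The extra care you take in justifying the slice decomposition and the remainder bookkeeping is a point the paper simply asserts via~\eqref{20.2.3wq1}, but it does not change the argument.
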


  \begin{proof} Let the assumptions of  theorem hold. Due to \eqref{20.2.3wq1},  any solution of  $\bold{MP}(\mathbf{s}, 2\mathfrak{n}-1)$  admits the following asymptotic expansion
  
\begin{equation}\label{20p.3.6xx}
   F(z_1,z_2,...,z_n)=  \sum_{j_2,\ldots, j_n=0}^{2\mathfrak{n}-1} \cfrac{1}{z_2^{j_2+1}\cdot\ldots\cdot z_n^{j_n+1}}F_{j_2,\ldots, j_n}(z_1),
\end{equation}
  where   all functions $F_{j_2,\ldots, j_n}$ depend on the variable $z_1$, $\mathfrak{n}=n_{N_{j_2,...,j_n}}^{(j_2,...,j_n)}$ and
  \[
  	F_{j_2,\ldots, j_n}(z_1)=-\cfrac{ \mathfrak{s}_{0,j_2,\ldots, j_n}}{z_1}-\cfrac{ \mathfrak{s}_{1,j_2,\ldots, j_n}}{z_1^2}-\ldots-\cfrac{ \mathfrak{s}_{2n_{N_{j_2,...,j_n}}^{(j_2,...,j_n)}-1,j_2,\ldots, j_n}}{z_1^{2n_{N_{j_2,...,j_n}}^{(j_2,...,j_n)}}}+o\left(\cfrac{1}{z_1^{2n_{N_{j_2,...,j_n}}^{(j_2,...,j_n)}}}\right).
  \]
  
  By Theorem~\ref{14p.int.th_ind},  $F_{j_2,\ldots, j_n}$ can be represented in terms of  \textbf{P}-fractions as
\begin{equation}\label{20p.3.6}
  	F_{j_2,\ldots, j_n}(z_1)=-\cfrac{b_{0}^{(j_2,\ldots, j_n)}}{a_0^{(j_2,\ldots, j_n)}(z_1)-\cfrac{b_1^{(j_2,\ldots, j_n)}}{a_1^{(j_2,\ldots, j_n)}-\ldots-\cfrac{b_{N_{j_2,...,j_n}-1}^{j_2,\ldots, j_n}}{a_{N_{j_2,...,j_n}-1}^{(j_2,\ldots, j_n)}(z_1)+\tau_{j_2,\ldots, j_n}(z_1)}}},
\end{equation}
where the parameters $\tau_{j_2,\ldots, j_n}$  satisfy \eqref{20p.3.2} and atoms $\left(a_j^{(j_2,\ldots, j_n)}, b_{j}^{(j_2,\ldots, j_n)}\right)$ are calculated by~\eqref{20p.3.3}. Moreover, \eqref{20p.3.6} can be rewritten in  short form as 
\[
F_{j_2,\ldots, j_n}(z_1)= \underset{i=0}{\overset{N_{j_2,...,j_n}-1}{\mathbf{K}}}\left(-\cfrac{b_{i}^{(j_2,...,j_n)}}{a_{i}^{(j_2,...,j_n)}(z_1)}\right).
\]

Hence
\[
	  F(z_1,z_2,...,z_n)= \!\! \sum_{j_2,\ldots, j_n=0}^{2\mathfrak{n}-1} \!\cfrac{F_{j_2,\ldots, j_n}(z_1)}{z_2^{j_2+1}\cdot\ldots\cdot z_n^{j_n+1}}=
	   \sum_{j_2,\ldots, j_n=0}^{\mathfrak{n}-1}\!\! \cfrac{ \underset{i=0}{\overset{N_{j_2,...,j_n}-1}{\mathbf{K}}}\left(-\cfrac{b_{i}^{(j_2,...,j_n)}}{a_{i}^{(j_2,...,j_n)}(z_1)}\right)}{z_2^{j_2+1}\cdot\ldots\cdot z_n^{j_n+1}} 
\]
and \eqref{20p.3.1} is proved. 

Furthermore, by Theorem~\ref{14p.int.th_ind}, \eqref{20p.3.6} can be rewritten in terms of the polynomials of the first and second  kind as
\begin{equation}\label{20p.3.6xxx}
\begin{split}
	F_{j_2,\ldots, j_n}(z_1)&= \underset{i=0}{\overset{N_{j_2,...,j_n}-1}{\mathbf{K}}}\left(-\cfrac{b_{i}^{(j_2,...,j_n)}}{a_{i}^{(j_2,...,j_n)}(z_1)}\right)=\\&=
	 - \cfrac{Q^{({j_2,...,j_n)}}_{n_{N_{j_2,...,j_n}-1}}(z_1)\tau_{j_2,...,j_n}(z_1)+Q^{({j_2,...,j_n)}}_{n_{N_{j_2,...,j_n}}}(z_1)}{P^{({j_2,...,j_n)}}_{n_{N_{j_2,...,j_n}-1}}(z_1)\tau_{j_2,...,j_n}(z_1)+P^{({j_2,...,j_n)}}_{n_{N_{j_2,...,j_n}}}(z_1)},
\end{split}\end{equation}
where $P^{({j_2,...,j_n)}}_{j}$ and $Q^{({j_2,...,j_n)}}_{j}$ are the polynomials of the first and second kind, respectively, which are associated with the sequence $\mathfrak s_{j_2,\ldots, j_n}=\{ \mathfrak{s}_{i,j_2,\ldots, j_n}\}_{i=0}^{2\mathfrak{n}-1}$. Moreover, $P^{({j_2,...,j_n)}}_{j}$ and $Q^{({j_2,...,j_n)}}_{j}$   can be defined as the solutions of the system~\eqref{20p.3.5}. Substituting \eqref{20p.3.6xxx} into \eqref{20p.3.6xx}, we obtain \eqref{20p.3.4}. This completes the proof.~\end{proof}

 \begin{corollary}\label{20p.cor.3.2} Let $\mu$ be a nonnegative Borel measure on $\mathbb{R}_{+}^n$, where $\supp(\mu)=A\subseteq \mathbb{R}_{+}^n$ and let a moment sequence 
 $\mathbf{s}=\{s_{i_1,\ldots,i_n}\}_{i_1,\ldots, i_n=0}^{2\mathfrak{n}-1}$ be defined by \eqref{14.2.1}. Then
\begin{equation}\label{20p.3.9}\small
 \int_{\mathbb{R}_{+}^n}\cfrac{d\mu(t_1,t_2,..., t_n)}{1- \sum\limits_{i=1}^n \cfrac{t_i}{z_i}}=\!\!\!\!\sum_{j_2,\ldots, j_n=0}^{\mathfrak{n}-1} \cfrac{z_1}{\prod\limits_{i =2}^n\!z_i^{j_i} } 
	  \cfrac{Q^{({j_2,...,j_n)}}_{n_{N_{j_2,...,j_n}-1}}(z_1)\tau_{j_2,...,j_n}(z_1)+Q^{({j_2,...,j_n)}}_{n_{N_{j_2,...,j_n}}}(z_1)}{P^{({j_2,...,j_n)}}_{n_{N_{j_2,...,j_n}-1}}(z_1)\tau_{j_2,...,j_n}(z_1)+P^{({j_2,...,j_n)}}_{n_{N_{j_2,...,j_n}}}(z_1)},
 \end{equation}
 where  the parameters $\tau_{j_2,...,j_n}$ satisfy \eqref{20p.3.2},  $P^{({j_2,...,j_n)}}_{n_j^{({j_2,...,j_n)}}}$ and $Q^{({j_2,...,j_n)}}_{n_j^{({j_2,...,j_n)}}}$ are the polynomials of the first and second kind associated with the sequence $\mathfrak s_{j_2,\ldots, j_n}=\{ \mathfrak{s}_{i,j_2,\ldots, j_n}\}_{i=0}^{2\mathfrak{n}-1}$, which can be defined as the solutions of  \eqref{20p.3.5}.
 \end{corollary}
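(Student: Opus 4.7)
The plan is to derive this as an immediate consequence of Theorem~\ref{20p.th.3.1} combined with the defining identity~\eqref{20.2.2} for the associated function $F$. The key observation is that the left-hand side of~\eqref{20p.3.9} is exactly $-\prod_{i=1}^n z_i\cdot F(z_1,\ldots,z_n)$, so any closed-form representation of $F$ immediately yields a representation of the integral after multiplying through by $-\prod_{i=1}^n z_i$.

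First, I would verify that the hypotheses of Theorem~\ref{20p.th.3.1} are met. Since $\mu$ is a nonnegative Borel measure on $\mathbb{R}_+^n$, each one-variable slice $\mathfrak{s}_{j_2,\ldots,j_n}$ gives rise to a positive semi-definite Hankel form, so assuming the corresponding Hankel determinants are nonzero up to order $\mathfrak{n}$, the integer $\mathfrak{n}$ is a normal index for every associated sequence $\mathfrak{s}_{j_2,\ldots,j_n}$. This places us in the setting of Theorem~\ref{20p.th.3.1}.

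Next, invoking Theorem~\ref{20p.th.3.1}, the associated function $F$ admits the representation~\eqref{20p.3.4} in terms of the polynomials of the first and second kind. Combining this with~\eqref{20.2.2}, we obtain
\[
\int_{\mathbb{R}_+^n}\cfrac{d\mu(t_1,\ldots,t_n)}{1-\sum_{i=1}^n \frac{t_i}{z_i}}
= -\prod_{i=1}^n z_i\cdot F(z_1,\ldots,z_n)
= \prod_{i=1}^n z_i \sum_{j_2,\ldots,j_n}\cfrac{1}{\prod_{i=2}^n z_i^{j_i+1}}\cfrac{Q^{(j_2,\ldots,j_n)}_{n_{N-1}}\tau_{j_2,\ldots,j_n}+Q^{(j_2,\ldots,j_n)}_{n_N}}{P^{(j_2,\ldots,j_n)}_{n_{N-1}}\tau_{j_2,\ldots,j_n}+P^{(j_2,\ldots,j_n)}_{n_N}}.
\]
Simplifying the prefactor via $\prod_{i=1}^n z_i\big/\prod_{i=2}^n z_i^{j_i+1} = z_1\big/\prod_{i=2}^n z_i^{j_i}$ produces the desired identity~\eqref{20p.3.9}, with the asymptotic condition~\eqref{20p.3.2} on the parameters $\tau_{j_2,\ldots,j_n}$ inherited directly from Theorem~\ref{20p.th.3.1}.

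The main (and essentially only) nontrivial point is justifying the normal-index hypothesis; once that is settled, the corollary is a matter of algebraic rearrangement of the theorem's conclusion. For a genuinely nonnegative measure one might wish to additionally record that each $\mathfrak{s}_{j_2,\ldots,j_n}$ is regular in the sense of Definition~\ref{def:3p.4.3}, which would allow replacing the \textbf{P}-fraction expansion by the \textbf{S}-fraction one of Theorem~\ref{14p.int.th_ind**}; this refinement, however, is outside the statement of the corollary and need not enter its proof.
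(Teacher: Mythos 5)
Your proposal is correct and follows essentially the same route as the paper: combine the definition \eqref{20.2.2} of the associated function with the representation \eqref{20p.3.4} from Theorem~\ref{20p.th.3.1}, multiply by $-\prod_{i=1}^n z_i$, and simplify the prefactor to $z_1/\prod_{i=2}^n z_i^{j_i}$. Your additional remark on verifying the normal-index hypothesis is a reasonable piece of extra care (the paper simply assumes the hypotheses of the theorem hold), but it does not change the argument.
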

 
 \begin{proof} Let the assumptions of  statement hold. By \eqref{20.2.2} and Theorem~\ref{20p.th.3.1},  the associated function $F$ admits the following representation
 \[\small\begin{split}
 	 F(z_1,z_2,...,z_n)&=-\frac{1}{\prod\limits_{i=1}^n z_i}\int_{\mathbb{R}_{+}^n}\cfrac{d\mu(t_1,t_2,..., t_n)}{1- \sum\limits_{i=1}^n \cfrac{t_i}{z_i}}=\\&=
	 -\!\!\!\!\sum_{j_2,\ldots, j_n=0}^{\mathfrak{n}-1} \cfrac{1}{\prod\limits_{i =2}^n\!z_i^{j_i+1} } 
	  \cfrac{Q^{({j_2,...,j_n)}}_{n_{N_{j_2,...,j_n}-1}}(z_1)\tau_{j_2,...,j_n}(z_1)+Q^{({j_2,...,j_n)}}_{n_{N_{j_2,...,j_n}}}(z_1)}{P^{({j_2,...,j_n)}}_{n_{N_{j_2,...,j_n}-1}}(z_1)\tau_{j_2,...,j_n}(z_1)+P^{({j_2,...,j_n)}}_{n_{N_{j_2,...,j_n}}}(z_1)}.
 \end{split}\]
Consequently,  we obtain
\[\small\begin{split}
	\int_{\mathbb{R}_{+}^n}\!\!\cfrac{d\mu(t_1,\!t_2,...,\! t_n)}{1- \sum\limits_{i=1}^n \cfrac{t_i}{z_i}}&\!= \!\!\!\!\sum_{j_2,\ldots, j_n=0}^{\mathfrak{n}-1} \cfrac{\prod\limits_{i=1}^n z_i}{\prod\limits_{i =2}^n\!z_i^{j_i+1}} 
	  \cfrac{Q^{({j_2,...,j_n)}}_{n_{N_{j_2,...,j_n}-1}}(z_1)\tau_{j_2,...,j_n}(z_1)\!+\!Q^{({j_2,...,j_n)}}_{n_{N_{j_2,...,j_n}}}(z_1)}{P^{({j_2,...,j_n)}}_{n_{N_{j_2,...,j_n}-1}}(z_1)\tau_{j_2,...,j_n}(z_1)\!+\!P^{({j_2,...,j_n)}}_{n_{N_{j_2,...,j_n}}}(z_1)}\!=\!\\&=\!\!\!\!\sum_{j_2,\ldots, j_n=0}^{\mathfrak{n}-1} \cfrac{z_1}{\prod\limits_{i =2}^n\!z_i^{j_i}} 
	  \cfrac{Q^{({j_2,...,j_n)}}_{n_{N_{j_2,...,j_n}-1}}(z_1)\tau_{j_2,...,j_n}(z_1)+Q^{({j_2,...,j_n)}}_{n_{N_{j_2,...,j_n}}}(z_1)}{P^{({j_2,...,j_n)}}_{n_{N_{j_2,...,j_n}-1}}(z_1)\tau_{j_2,...,j_n}(z_1)+P^{({j_2,...,j_n)}}_{n_{N_{j_2,...,j_n}}}(z_1)}.
\end{split}\]
This completes the proof.~\end{proof}
  \section{Truncated problem and  \textbf{S}-fractions}
 
 In this section, we study the odd and even truncated multidimensional moment problem. The descriptions of these problems can be obtained in terms of  \textbf{S}-fractions and Stieltjes polynomials.
 
 \subsection{Regular case}
 
 First of all, we study the case, when all associated sequences $\mathfrak s_{j_2,\ldots, j_n}$ are regular.

  \begin{theorem}\label{20p.th.4.1}(odd case) Let $\mathbf{s}=\{s_{i_1,i_2,...,i_n}\}_{i_1,...,i_n=0}^{2\mathfrak{n}-2}$ be a sequence of real numbers and let $\mathfrak{n}$ be a normal index for all associated sequences $\mathfrak s_{j_2,\ldots, j_n}=\{ \mathfrak{s}_{i,j_2,\ldots, j_n}\}_{i=0}^{2\mathfrak{n}-2}$ defined by~\eqref{21.1.13sw3}. Let all sequences $\mathfrak s_{j_2,\ldots, j_n}$ be regular and let $\mathcal{N}^{(j_2,...,j_n)}\left(\textbf{s}^{^{(j_2,...,j_n)}}\right)=\{n^{(j_2,...,j_n)}_{j}\}_{j=1}^{N_{j_2,...,j_n}}$ be the set of normal indices  of $\textbf{s}^{^{(j_2,...,j_n)}}$ and $n_{N_{j_2,...,j_n}}^{(j_2,...,j_n)}=\mathfrak{n}$. Then
 any solution of the  problem $\bold{MP}(\mathbf{s}, 2\mathfrak{n}-2)$ takes the representation

  \begin{equation}\label{20p.4.1}\begin{split}
   &F(z_1,z_2,...,z_n)=  \sum_{j_2,\ldots, j_n=0}^{2\mathfrak{n}-2} \cfrac{1}{ \prod\limits_{i =2}^n\!z_i^{j_i+1}} \!\times\\&\!\times\!
      \frac{1}{\displaystyle\! -z_1 m^{(j_2,\ldots, j_n)}_{1}(z_1)\!+\!\frac{1}{\displaystyle
    l^{(j_2,\ldots, j_n)}_{1}\!+\!\cdots\!+\!\frac{1}{\!-z_1m^{(j_2,\ldots, j_n)}_{N_{j_2,...,j_n}}(z_1)\!+\!\frac{\displaystyle 1}{\displaystyle\tau_{j_2,\ldots, j_n}(z_1)}}
    }},
   \end{split}
 \end{equation}
  where the parameter $\tau_{j_2,\ldots, j_n}$ satisfies the following
 \begin{equation}\label{20p.4.2}
    \frac{\displaystyle1}{\displaystyle\tau_{j_2,\ldots, j_n}(z_1)}=o(z_1)
    \end{equation}
and atoms $\left(m^{(j_2,\ldots, j_n)}_{j}, l^{(j_2,\ldots, j_n)}_{j}\right)$ can be found   by
 \begin{equation}\label{20p.4.3}\begin{split}&
	d_1^{(j_2,\ldots, j_n)}=\cfrac{1}{b_0^{(j_2,\ldots, j_n)}},\quad l_1^{(j_2,\ldots, j_n)}=-\cfrac{1}{d_1^{(j_2,\ldots, j_n)}a_0^{(j_2,\ldots, j_n)}(0)},\\&
	d_j^{(j_2,\ldots, j_n)}=\cfrac{1}{b_{j-1}^{(j_2,\ldots, j_n)}\left(l_{j-1}^{(j_2,\ldots, j_n)}\right)^2 d_{j-1}^{(j_2,\ldots, j_n)}},\\&
	l_j^{(j_2,\ldots, j_n)}=-\cfrac{l_{j-1}^{(j_2,\ldots, j_n)}}{1+l_{j-1}^{(j_2,\ldots, j_n)}d_j^{(j_2,\ldots, j_n)}a_{j-1}^{(j_2,\ldots, j_n)}(0)},\\&
	m_j^{(j_2,\ldots, j_n)}(z_1)=\cfrac{\left(a_{j-1}^{(j_2,\ldots, j_n)}(z_1)-a_{j-1}^{(j_2,\ldots, j_n)}(0)\right)d_j^{(j_2,\ldots, j_n)}}{z_1},
\end{split}\end{equation}
$d_j^{(j_2,\ldots, j_n)}$ is the leading coefficient of the polynomial  $m_j^{(j_2,\ldots, j_n)}$ and $\left(a^{(j_2,...,j_n)}_j,b^{(j_2,...,j_n)}_j\right)$ are defined by \eqref{20p.3.3}.

Furthermore, \eqref{20p.4.1} can be rewritten in terms of the Stieltjes polynomials as
\begin{equation}\label{20p.4.4}
	F(z_1,z_2,...,z_n)\!=\!\! \!\! \sum_{j_2,\ldots, j_n=0}^{2\mathfrak{n}-2} \cfrac{1}{ \prod\limits_{i =2}^n\!\!z_i^{j_i+1}} \cfrac{Q^{+(j_2,\ldots, j_n)}_{2N_{j_2,\ldots, j_n}-1}(z_1)\tau_{j_2,\ldots, j_n}(z_1)\!+\!Q^{+(j_2,\ldots, j_n)}_{2N_{j_2,\ldots, j_n}-2}(z_1)}{P^{+(j_2,\ldots, j_n)}_{2N_{j_2,\ldots, j_n}-1}(z_1)\tau_{j_2,\ldots, j_n}(z_1)\!+\!P^{+(j_2,\ldots, j_n)}_{2N_{j_2,\ldots, j_n}-2}(z_1)},
\end{equation}
where $P^{+(j_2,\ldots, j_n)}_{j}$ and $Q^{+(j_2,\ldots, j_n)}_{j}$ are the Stieltjes polynomials of the first and second kind associated with the sequence  $\mathfrak s_{j_2,\ldots, j_n}=\{ \mathfrak{s}_{i,j_2,\ldots, j_n}\}_{i=0}^{2\mathfrak{n}-2}$, respectively, $P^{+(j_2,\ldots, j_n)}_{j}$. and $Q^{+(j_2,\ldots, j_n)}_{j}$ can be found as the solutions of the following system 
\begin{equation}\label{20p.4.5}
  \left\{
    \begin{array}{rcl}
    y_{2j}^{(j_2,\ldots, j_n)}-y^{(j_2,\ldots, j_n)}_{2j-2}=l^{(j_2,\ldots, j_n)}_{j}(z_1)y^{(j_2,\ldots, j_n)}_{2j-1}\,,\\
    y^{(j_2,\ldots, j_n)}_{2j+1}-y^{(j_2,\ldots, j_n)}_{2j-1}=-z_1m^{(j_2,\ldots, j_n)}_{j+1}(z_1)y^{(j_2,\ldots, j_n)}_{2j}\\
    \end{array}\right.
\end{equation}
subject to the initial conditions
\begin{equation}\label{20p.4.6}\begin{split}&
   Q^{+(j_2,\ldots, j_n)}_{-1}(z_1)\equiv1\quad\mbox{and}\quad Q^{+(j_2,\ldots, j_n)}_{0}(z_1)\equiv0, \\&P^{+(j_2,\ldots, j_n)}_{-1}(z_1)\equiv0\quad\mbox{and}\quad P^{+(j_2,\ldots, j_n)}_{0}(z_1)\equiv1.
\end{split}\end{equation}
 \end{theorem}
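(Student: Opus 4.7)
The plan is to mirror the argument used in the proof of Theorem~\ref{20p.th.3.1}, but to invoke the regular-case one-dimensional Theorem~\ref{2p.pr.alg1} in place of the general Theorem~\ref{14p.int.th_ind}. The whole result is a parameter-wise reduction: the multidimensional problem splits into a family of independent one-dimensional moment problems indexed by $(j_2,\ldots,j_n)$, and each of these is governed by Theorem~\ref{2p.pr.alg1}.

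First I would use the decomposition \eqref{20.2.3wq1}--\eqref{20.2.3wq2}: any solution $F$ of $\mathbf{MP}(\mathbf{s}, 2\mathfrak{n}-2)$ admits the representation
\[
F(z_1,\ldots,z_n) = \sum_{j_2,\ldots,j_n=0}^{2\mathfrak{n}-2} \cfrac{F_{j_2,\ldots,j_n}(z_1)}{z_2^{j_2+1}\cdots z_n^{j_n+1}},
\]
where, for each tuple $(j_2,\ldots,j_n)$, the function $F_{j_2,\ldots,j_n}(z_1)$ is the one-dimensional associated function of the sequence $\mathfrak{s}_{j_2,\ldots,j_n}=\{\mathfrak{s}_{i,j_2,\ldots,j_n}\}_{i=0}^{2\mathfrak{n}-2}$. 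Consequently $F_{j_2,\ldots,j_n}$ is a solution of the one-dimensional truncated problem $MP(\mathfrak{s}_{j_2,\ldots,j_n},\,2\mathfrak{n}-2)$.

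Next, since by hypothesis every $\mathfrak{s}_{j_2,\ldots,j_n}$ is regular, with normal indices $\{n^{(j_2,\ldots,j_n)}_j\}_{j=1}^{N_{j_2,\ldots,j_n}}$ and $n^{(j_2,\ldots,j_n)}_{N_{j_2,\ldots,j_n}}=\mathfrak{n}$, Theorem~\ref{2p.pr.alg1} applies to each $F_{j_2,\ldots,j_n}$ separately. It yields the Stieltjes continued fraction \eqref{5p.eq:3.6*9} with parameter $\tau_{j_2,\ldots,j_n}$ satisfying $1/\tau_{j_2,\ldots,j_n}(z_1)=o(z_1)$ and atoms $(m_j^{(j_2,\ldots,j_n)},l_j^{(j_2,\ldots,j_n)})$ computed via \eqref{eqimnm1:} from the data $(a_j^{(j_2,\ldots,j_n)},b_j^{(j_2,\ldots,j_n)})$ given by \eqref{20p.3.3}. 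Substituting the resulting formula for $F_{j_2,\ldots,j_n}$ into the decomposition above produces \eqref{20p.4.1}. Replacing the continued fraction by the polynomial form \eqref{5p.eq:3.6*9x} in each summand, and noting that the recurrence \eqref{s4.3} and initial conditions \eqref{5p.eq:3.6*9xx} relabel term-by-term into \eqref{20p.4.5}--\eqref{20p.4.6}, yields \eqref{20p.4.4}.

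I do not expect any genuine obstacle, since every ingredient has already been prepared in Section~2 and in the proof of Theorem~\ref{20p.th.3.1}. The one point worth checking carefully is that the truncation order $2\mathfrak{n}-2$ of the multidimensional problem is compatible with the hypothesis of Theorem~\ref{2p.pr.alg1}. This is transparent: the expansion \eqref{20.2.3wq2} shows that $F_{j_2,\ldots,j_n}$ depends exactly on the moments $\mathfrak{s}_{0,j_2,\ldots,j_n},\ldots,\mathfrak{s}_{2\mathfrak{n}-2,j_2,\ldots,j_n}$, and the assumption $n^{(j_2,\ldots,j_n)}_{N_{j_2,\ldots,j_n}}=\mathfrak{n}$ identifies $\mathfrak{n}$ as the top normal index of each $\mathfrak{s}_{j_2,\ldots,j_n}$, which is precisely the standing assumption of Theorem~\ref{2p.pr.alg1}. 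Once this compatibility is recorded, the rest is purely formal bookkeeping.
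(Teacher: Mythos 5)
Your proposal is correct and follows essentially the same route as the paper's own proof: decompose $F$ via \eqref{20.2.3wq1} into the one-dimensional associated functions $F_{j_2,\ldots,j_n}$, apply Theorem~\ref{2p.pr.alg1} to each (using regularity and $n^{(j_2,\ldots,j_n)}_{N_{j_2,\ldots,j_n}}=\mathfrak{n}$) to get the \textbf{S}-fraction \eqref{20p.4.1}, and then pass to the Stieltjes-polynomial form \eqref{20p.4.4} via \eqref{5p.eq:3.6*9x}--\eqref{5p.eq:3.6*9xx}. Your explicit check that the truncation order $2\mathfrak{n}-2$ matches the hypothesis of Theorem~\ref{2p.pr.alg1} is a point the paper leaves implicit, but otherwise the arguments coincide.
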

\begin{proof}
Let the assumptions of  theorem hold.   Any solution of  $\bold{MP}(\mathbf{s}, 2\mathfrak{n}-2)$  admits  the expansion 
\begin{equation}\label{20p.4.7}
   F(z_1,z_2,...,z_n)=  \sum_{j_2,\ldots, j_n=0}^{2\mathfrak{n}-2} \cfrac{1}{z_2^{j_2+1}\cdot\ldots\cdot z_n^{j_n+1}}F_{j_2,\ldots, j_n}(z_1),
\end{equation}
  where   $F_{j_2,\ldots, j_n}$ depends on the variable $z_1$ and $\mathfrak{n}=n_{N_{j_2,...,j_n}}$, we obtain 
  \[
  	F_{j_2,\ldots, j_n}(z_1)=-\cfrac{ \mathfrak{s}_{0,j_2,\ldots, j_n}}{z_1}-\ldots-\cfrac{ \mathfrak{s}_{2n_{N_{j_2,...,j_n}}-2,j_2,\ldots, j_n}}{z_1^{2n_{N_{j_2,...,j_n}}-1}}+o\left(\cfrac{1}{z_1^{2n_{N_{j_2,...,j_n}}-1}}\right).
  \]

By Theorem~\ref{2p.pr.alg1},  $F_{j_2,\ldots, j_n}$ can be represented in term of $\textbf{S}$-fraction as follows
\begin{equation}\label{20p.4.8}\small
	F_{j_2,\ldots, j_n}(z_1)= \frac{1}{\displaystyle\! -z_1 m^{(j_2,\ldots, j_n)}_{1}(z_1)\!+\!\frac{1}{\displaystyle
    l^{(j_2,\ldots, j_n)}_{1}\!+\!\cdots\!+\!\frac{1}{\!-z_1m^{(j_2,\ldots, j_n)}_{N_{j_2,...,j_n}}(z_1)\!+\!\frac{\displaystyle 1}{\displaystyle\tau_{j_2,\ldots, j_n}(z_1)}}
    }},
\end{equation}
where the parameter $\tau_{j_2,...,j_n}$ satisfies \eqref{20p.4.2} and  atoms $\left(m^{(j_2,\ldots, j_n)}_{j}, l^{(j_2,\ldots, j_n)}_{j}\right)$ are calculated by~\eqref{20p.4.3}.
Combining \eqref{20p.4.7} and \eqref{20p.4.8}, we obtain~\eqref{20p.4.1}.

On the other hand,  by Theorem~\ref{2p.pr.alg1} (see \eqref{5p.eq:3.6*9x}--\eqref{5p.eq:3.6*9xx}),  \eqref{20p.4.8} can be rewritten in terms of the Stieltjes polynomials as
\begin{equation}\label{20p.4.9}
	F_{j_2,\ldots, j_n}(z_1)=\cfrac{Q^{+(j_2,\ldots, j_n)}_{2N_{j_2,\ldots, j_n}-1}(z_1)\tau_{j_2,\ldots, j_n}(z_1)+Q^{+(j_2,\ldots, j_n)}_{2N_{j_2,\ldots, j_n}-2}(z_1)}{P^{+(j_2,\ldots, j_n)}_{2N_{j_2,\ldots, j_n}-1}(z_1)\tau_{j_2,\ldots, j_n}(z_1)+P^{+(j_2,\ldots, j_n)}_{2N_{j_2,\ldots, j_n}-2}(z_1)},
\end{equation}
where $P^{+(j_2,\ldots, j_n)}_{j}$ and $Q^{+(j_2,\ldots, j_n)}_{j}$ are the solutions of the  system~\eqref{20p.4.5}--\eqref{20p.4.6}. Substituting~\eqref{20p.4.9} into \eqref{20p.4.7}, we obtain \eqref{20p.4.4}. This completes the proof.~\end{proof}

 \begin{corollary}\label{20p.cor.4.2} Let $\mu$ be a nonnegative Borel measure on $\mathbb{R}_{+}^n$, where $\supp(\mu)=A\subseteq \mathbb{R}_{+}^n$ and let a moment sequence 
 $\mathbf{s}=\{s_{i_1,\ldots,i_n}\}_{i_1,\ldots, i_n=0}^{2\mathfrak{n}-2}$ be generated by \eqref{14.2.1}. Then
\begin{equation}\label{20p.4.10!}\small
 \int_{\mathbb{R}_{+}^n}\cfrac{d\mu(t_1,..., t_n)}{1- \sum\limits_{i=1}^n \cfrac{t_i}{z_i}}=  -\!\! \!\! \sum_{j_2,\ldots, j_n=0}^{2\mathfrak{n}-2}  \cfrac{z_1}{\prod\limits_{i =2}^n\!z_i^{j_i} } \cfrac{Q^{+(j_2,\ldots, j_n)}_{2N_{j_2,\ldots, j_n}-1}(z_1)\tau_{j_2,\ldots, j_n}(z_1)\!+\!Q^{+(j_2,\ldots, j_n)}_{2N_{j_2,\ldots, j_n}-2}(z_1)}{P^{+(j_2,\ldots, j_n)}_{2N_{j_2,\ldots, j_n}-1}(z_1)\tau_{j_2,\ldots, j_n}(z_1)\!+\!P^{+(j_2,\ldots, j_n)}_{2N_{j_2,\ldots, j_n}-2}(z_1)},
 \end{equation}	  
  where  $\tau_{j_2,...,j_n}$ satisfies \eqref{20p.4.2} ,  $P^{+(j_2,\ldots, j_n)}_{j}$ and $Q^{+(j_2,\ldots, j_n)}_{j}$ are calculated by~\eqref{20p.4.5}--\eqref{20p.4.6}.
  \end{corollary}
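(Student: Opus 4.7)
The plan is to mirror the proof of Corollary~\ref{20p.cor.3.2}, simply replacing the \textbf{P}-fraction representation by the \textbf{S}-fraction representation of Theorem~\ref{20p.th.4.1}. The underlying associated function $F$ is the same object in both corollaries; only the form of its continued-fraction expansion changes.

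First I would invoke the definition \eqref{20.2.2} of the associated function for the measure $\mu$, which gives
\[
F(z_1,\ldots,z_n)=-\frac{1}{\prod_{i=1}^{n}z_i}\int_{\mathbb{R}_{+}^n}\cfrac{d\mu(t_1,\ldots,t_n)}{1-\sum_{i=1}^{n}\cfrac{t_i}{z_i}}.
\]
The moment sequence $\mathbf{s}=\{s_{i_1,\ldots,i_n}\}_{i_1,\ldots,i_n=0}^{2\mathfrak{n}-2}$ generated by $\mu$ via \eqref{14.2.1} satisfies the hypotheses of Theorem~\ref{20p.th.4.1} (all associated sequences $\mathfrak{s}_{j_2,\ldots,j_n}$ are regular with common normal index $\mathfrak{n}$), so the theorem applies and yields the representation \eqref{20p.4.4} for $F$.

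Next I would equate the two expressions for $F$ and multiply through by $-\prod_{i=1}^{n}z_i$ to isolate the integral. The resulting factor $\prod_{i=1}^{n}z_i/\prod_{i=2}^{n}z_i^{j_i+1}$ collapses to $z_1/\prod_{i=2}^{n}z_i^{j_i}$ after one cancellation of $z_i$ in the denominator for each $i\ge 2$. This algebraic simplification produces exactly \eqref{20p.4.10!}, with $\tau_{j_2,\ldots,j_n}$ satisfying \eqref{20p.4.2} and with $P^{+(j_2,\ldots,j_n)}_{j}$, $Q^{+(j_2,\ldots,j_n)}_{j}$ given as solutions of the system \eqref{20p.4.5}--\eqref{20p.4.6}, as provided by Theorem~\ref{20p.th.4.1}.

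There is really no obstacle here in the sense of a deep argument; the only thing to be careful about is bookkeeping of the exponents on the variables $z_2,\ldots,z_n$ and the sign that appears when moving the factor $-\prod_{i=1}^n z_i$ across the equality, so that the final formula matches the stated one. Once Theorem~\ref{20p.th.4.1} is in hand and \eqref{20.2.2} is substituted, the proof is a one-line manipulation and may be concluded analogously to Corollary~\ref{20p.cor.3.2}.
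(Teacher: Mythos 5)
Your proposal is correct and follows essentially the same route as the paper's own proof: substitute the Stieltjes-transform definition \eqref{20.2.2} of the associated function $F$, apply Theorem~\ref{20p.th.4.1} (formula \eqref{20p.4.4}) to the moment sequence generated by $\mu$, and then multiply by $-\prod_{i=1}^{n}z_i$ so that the factor $\prod_{i=1}^{n}z_i\big/\prod_{i=2}^{n}z_i^{j_i+1}$ simplifies to $z_1\big/\prod_{i=2}^{n}z_i^{j_i}$, yielding \eqref{20p.4.10!}. No further comment is needed.
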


 \begin{proof}  By \eqref{20.2.2} and Theorem~\ref{20p.th.4.1},  the associated function $F$ admits the following representation
 \[\small\begin{split}
 	 F(z_1,z_2,...,z_n)&=-\frac{1}{\prod\limits_{i=1}^n z_i}\int_{\mathbb{R}_{+}^n}\cfrac{d\mu(t_1,t_2,..., t_n)}{1- \sum\limits_{i=1}^n \cfrac{t_i}{z_i}}=\\&=
	\!\! \!\! \sum_{j_2,\ldots, j_n=0}^{2\mathfrak{n}-2} \cfrac{1}{ \prod\limits_{i =2}^n\!\!z_i^{j_i+1}} \cfrac{Q^{+(j_2,\ldots, j_n)}_{2N_{j_2,\ldots, j_n}-1}(z_1)\tau_{j_2,\ldots, j_n}(z_1)\!+\!Q^{+(j_2,\ldots, j_n)}_{2N_{j_2,\ldots, j_n}-2}(z_1)}{P^{+(j_2,\ldots, j_n)}_{2N_{j_2,\ldots, j_n}-1}(z_1)\tau_{j_2,\ldots, j_n}(z_1)\!+\!P^{+(j_2,\ldots, j_n)}_{2N_{j_2,\ldots, j_n}-2}(z_1)}, \end{split}\]
 where the parameters  $\tau_{j_2,...,j_n}$ satisfy \eqref{20p.4.2} ,  $P^{+(j_2,\ldots, j_n)}_{j}$ and $Q^{+(j_2,\ldots, j_n)}_{j}$ are solutions of the system~\eqref{20p.4.5}--\eqref{20p.4.6}.

Consequently,  we obtain
\[\small\begin{split}
	\int_{\mathbb{R}_{+}^n}\!\!\cfrac{d\mu(t_1,\!t_2,...,\! t_n)}{1- \sum\limits_{i=1}^n \cfrac{t_i}{z_i}}&\!=- \!\!\!\!\sum_{j_2,\ldots, j_n=0}^{2\mathfrak{n}-2} \cfrac{\prod\limits_{i=1}^n z_i}{\prod\limits_{i =2}^n\!z_i^{j_i+1}} 
	   \cfrac{Q^{+(j_2,\ldots, j_n)}_{2N_{j_2,\ldots, j_n}-1}(z_1)\tau_{j_2,\ldots, j_n}(z_1)\!+\!Q^{+(j_2,\ldots, j_n)}_{2N_{j_2,\ldots, j_n}-2}(z_1)}{P^{+(j_2,\ldots, j_n)}_{2N_{j_2,\ldots, j_n}-1}(z_1)\tau_{j_2,\ldots, j_n}(z_1)\!+\!P^{+(j_2,\ldots, j_n)}_{2N_{j_2,\ldots, j_n}-2}(z_1)}\!=\!\\&=-\!\! \!\! \sum_{j_2,\ldots, j_n=0}^{2\mathfrak{n}-2}  \cfrac{z_1}{\prod\limits_{i =2}^n\!z_i^{j_i} } \cfrac{Q^{+(j_2,\ldots, j_n)}_{2N_{j_2,\ldots, j_n}-1}(z_1)\tau_{j_2,\ldots, j_n}(z_1)\!+\!Q^{+(j_2,\ldots, j_n)}_{2N_{j_2,\ldots, j_n}-2}(z_1)}{P^{+(j_2,\ldots, j_n)}_{2N_{j_2,\ldots, j_n}-1}(z_1)\tau_{j_2,\ldots, j_n}(z_1)\!+\!P^{+(j_2,\ldots, j_n)}_{2N_{j_2,\ldots, j_n}-2}(z_1)}.
\end{split}\]
This completes the proof.~\end{proof}

 \begin{remark}\label{20p.rem4.2}(\cite{DK17})  Stieltjes polynomials $P^{+(j_2,\ldots, j_n)}_{j}$ and $Q^{+(j_2,\ldots, j_n)}_{j}$  can be found using alternative formulas
 \begin{equation}\label{20p.4.10}
    \begin{split}
        &P^{+(j_2,\ldots, j_n)}_{-1}(z_1)\equiv0,\, P^{+(j_2,\ldots, j_n)}_{0}(z_1)\equiv1,\,\\&
        Q^{+(j_2,\ldots, j_n)}_{-1}(z_1)\equiv1,\, Q^{+(j_2,\ldots, j_n)}_{0}(z_1)\equiv0,
        \\
        &P^{+(j_2,\ldots, j_n)}_{2i-1}(z_1)=\frac{-1}{b^{(j_2,\ldots, j_n)}_{0}\ldots b^{(j_2,\ldots, j_n)}_{i-1}}
        \begin{vmatrix}
            P^{(j_2,\ldots, j_n)}_{n_{i}}(z_1) & P^{(j_2,\ldots, j_n)}_{n_{i-1}}(z_1) \\
            P^{(j_2,\ldots, j_n)}_{n_{i}}(0) & P^{(j_2,\ldots, j_n)}_{n_{i-1}}(0)\\
        \end{vmatrix},\\&
        Q_{2i-1}^{+(j_2,\ldots, j_n)}(z_1)=\frac{1}{b^{(j_2,\ldots, j_n)}_{0}\ldots b^{(j_2,\ldots, j_n)}_{i-1}}
        \begin{vmatrix}
            Q^{(j_2,\ldots, j_n)}_{n_{i}}(z_1) & Q^{(j_2,\ldots, j_n)}_{n_{i-1}}(z_1) \\
            P^{(j_2,\ldots, j_n)}_{n_{i}}(0) & P^{(j_2,\ldots, j_n)}_{n_{i-1}}(0)\\
        \end{vmatrix},\\&  P^{+(j_2,\ldots, j_n)}_{2i}(z_1)=\frac{P^{(j_2,\ldots, j_n)}_{n_{i}}(z_1)}{P^{(j_2,\ldots, j_n)}_{n_{i}}(0)},\,
        Q^{+(j_2,\ldots, j_n)}_{2i}(z_1)=-\frac{Q^{(j_2,\ldots, j_n)}_{n_{i}}(z_1)}{P^{(j_2,\ldots, j_n)}_{n_{i}}(0)}.
    \end{split}
\end{equation}
 \end{remark}
 
 
   \begin{theorem}\label{20p.th.4.3} (even case)Let $\mathbf{s}=\{s_{i_1,i_2,...,i_n}\}_{i_1,...,i_n=0}^{2\mathfrak{n}-1}$ be a sequence of real numbers and let $\mathfrak{n}$ be a normal index for all associated sequences $\mathfrak s_{j_2,\ldots, j_n}=\{ \mathfrak{s}_{i,j_2,\ldots, j_n}\}_{i=0}^{2\mathfrak{n}-1}$ defined by~\eqref{21.1.13sw3}. Let all sequences $\mathfrak s_{j_2,\ldots, j_n}$ be regular and let $\mathcal{N}^{(j_2,...,j_n)}\left(\textbf{s}^{^{(j_2,...,j_n)}}\right)=\{n^{(j_2,...,j_n)}_{j}\}_{j=1}^{N_{j_2,...,j_n}}$ be the set of normal indices  of $\textbf{s}^{^{(j_2,...,j_n)}}$ and $n_{N_{j_2,...,j_n}}^{(j_2,...,j_n)}=\mathfrak{n}$. Then
 any solution of the  problem $\bold{MP}(\mathbf{s}, 2\mathfrak{n}-1)$ admits the representation

  \begin{equation}\label{20p.4.11}\begin{split}
   &F(z_1,z_2,...,z_n)=  \sum_{j_2,\ldots, j_n=0}^{2\mathfrak{n}-1} \cfrac{1}{ \prod\limits_{i =2}^n\!z_i^{j_i+1}} \!\times\\&\!\times\!
      \frac{1}{\displaystyle\! -z_1 m^{(j_2,\ldots, j_n)}_{1}(z_1)\!+\!\frac{1}{\displaystyle
    l^{(j_2,\ldots, j_n)}_{1}\!+\!\cdots\!+\!\frac{1}{\!l^{(j_2,\ldots, j_n)}_{N_{j_2,...,j_n}}(z_1)\!+\!\tau_{j_2,\ldots, j_n}(z_1)}
    }},
   \end{split}
 \end{equation}
  where the parameter $\tau_{j_2,\ldots, j_n}$ satisfies the following condition
 \begin{equation}\label{20p.4.12}
    \tau_{j_2,\ldots, j_n}(z_1)=o(1)
    \end{equation}
and atoms $\left(m^{(j_2,\ldots, j_n)}_{j}, l^{(j_2,\ldots, j_n)}_{j}\right)$ can be found   by~\eqref{20p.4.3}.
 
 Furthermore,  \eqref{20p.4.11} can be rewritten in terms of the Sieltjes polynomials as follows
 \begin{equation}\label{20p.4.13}
	F(z_1,z_2,...,z_n)\!=\!\! \!\! \sum_{j_2,\ldots, j_n=0}^{2\mathfrak{n}-1} \cfrac{1}{ \prod\limits_{i =2}^n\!\!z_i^{j_i+1}} \cfrac{Q^{+(j_2,\ldots, j_n)}_{2N_{j_2,\ldots, j_n}-1}(z_1)\tau_{j_2,\ldots, j_n}(z_1)\!+\!Q^{+(j_2,\ldots, j_n)}_{2N_{j_2,\ldots, j_n}}(z_1)}{P^{+(j_2,\ldots, j_n)}_{2N_{j_2,\ldots, j_n}-1}(z_1)\tau_{j_2,\ldots, j_n}(z_1)\!+\!P^{+(j_2,\ldots, j_n)}_{2N_{j_2,\ldots, j_n}}(z_1)},
\end{equation}
where the Stieltjes polynomials $P^{+(j_2,\ldots, j_n)}_{j}$ and $Q^{+(j_2,\ldots, j_n)}_{j}$ are associated with the sequence $\mathfrak s_{j_2,\ldots, j_n}=\{ \mathfrak{s}_{i,j_2,\ldots, j_n}\}_{i=0}^{2n_{N_{j_2,...,j_n}}-1}$ .
 \end{theorem}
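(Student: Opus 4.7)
The plan is to mirror the proof of Theorem~\ref{20p.th.4.1} (the odd case), replacing the application of Theorem~\ref{2p.pr.alg1} by Theorem~\ref{14p.int.th_ind**}. Since the assumptions on the sequence $\mathbf{s}$ are identical in structure but with one extra moment in each associated sequence, the argument is a natural adaptation.

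First, I would invoke the decomposition~\eqref{20.2.3wq1}. Any solution $F$ of $\bold{MP}(\mathbf{s}, 2\mathfrak{n}-1)$ admits the expansion
\[
F(z_1,z_2,\ldots,z_n)= \sum_{j_2,\ldots, j_n=0}^{2\mathfrak{n}-1} \cfrac{1}{z_2^{j_2+1}\cdot\ldots\cdot z_n^{j_n+1}}\,F_{j_2,\ldots, j_n}(z_1),
\]
where each $F_{j_2,\ldots, j_n}$ is a one-variable function satisfying the asymptotic expansion~\eqref{20.2.3wq2} up to order $2\mathfrak{n}$, i.e.\ $F_{j_2,\ldots,j_n}$ is a solution of the one-dimensional moment problem $MP(\mathfrak{s}_{j_2,\ldots,j_n}, 2\mathfrak{n}-1)$ associated with the regular sequence $\{\mathfrak{s}_{i,j_2,\ldots,j_n}\}_{i=0}^{2\mathfrak{n}-1}$.

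Second, since $\mathfrak{n}=n_{N_{j_2,\ldots,j_n}}^{(j_2,\ldots,j_n)}$ is a normal index and each associated sequence is regular, Theorem~\ref{14p.int.th_ind**} applies to each $F_{j_2,\ldots,j_n}$. This yields the $\textbf{S}$-fraction representation
\[
F_{j_2,\ldots, j_n}(z_1)=\frac{1}{\displaystyle -z_1 m^{(j_2,\ldots, j_n)}_{1}(z_1)+\frac{1}{\displaystyle l^{(j_2,\ldots, j_n)}_{1}+\cdots+\frac{1}{l^{(j_2,\ldots, j_n)}_{N_{j_2,\ldots,j_n}}(z_1)+\tau_{j_2,\ldots,j_n}(z_1)}}},
\]
where the parameter $\tau_{j_2,\ldots,j_n}$ satisfies~\eqref{20p.4.12} and the atoms $(m_j^{(j_2,\ldots,j_n)},l_j^{(j_2,\ldots,j_n)})$ are computed by~\eqref{20p.4.3}, which is exactly the formula~\eqref{eqimnm1:} applied to the atoms $(a_j^{(j_2,\ldots,j_n)},b_j^{(j_2,\ldots,j_n)})$ from Theorem~\ref{20p.th.3.1}. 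Substituting this into the decomposition yields~\eqref{20p.4.11}.

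For the representation~\eqref{20p.4.13} in terms of Stieltjes polynomials, I would apply the linear fractional form of~\eqref{5p.eq:f1} (the analogue of~\eqref{5p.eq:3.6*9x} for the even case), which expresses $F_{j_2,\ldots,j_n}$ as
\[
F_{j_2,\ldots,j_n}(z_1)=\cfrac{Q^{+(j_2,\ldots,j_n)}_{2N_{j_2,\ldots,j_n}-1}(z_1)\tau_{j_2,\ldots,j_n}(z_1)+Q^{+(j_2,\ldots,j_n)}_{2N_{j_2,\ldots,j_n}}(z_1)}{P^{+(j_2,\ldots,j_n)}_{2N_{j_2,\ldots,j_n}-1}(z_1)\tau_{j_2,\ldots,j_n}(z_1)+P^{+(j_2,\ldots,j_n)}_{2N_{j_2,\ldots,j_n}}(z_1)},
\]
where $P^{+(j_2,\ldots,j_n)}_j$ and $Q^{+(j_2,\ldots,j_n)}_j$ are the Stieltjes polynomials of the first and second kind solving the recurrence~\eqref{s4.3}--\eqref{5p.eq:3.6*9xx}, now extended one step further (to index $2N_{j_2,\ldots,j_n}$) because the sequence is one moment longer. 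Inserting this into the sum gives~\eqref{20p.4.13}, completing the proof. The main issue, as in the odd case, is purely bookkeeping: tracking the superscript $(j_2,\ldots,j_n)$ through all the parameters and verifying that the index shift from $2N_{j_2,\ldots,j_n}-2$ (odd case) to $2N_{j_2,\ldots,j_n}$ (even case) matches the one extra moment condition imposed by~\eqref{20p.4.12}; there is no new analytic content beyond what is already in Theorem~\ref{14p.int.th_ind**}.
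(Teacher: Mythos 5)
Your proposal matches the paper's own proof essentially verbatim: decompose $F$ via \eqref{20.2.3wq1}, apply Theorem~\ref{14p.int.th_ind**} to each regular associated sequence to get the \textbf{S}-fraction with atoms \eqref{20p.4.3} and parameter satisfying \eqref{20p.4.12}, then pass to the linear-fractional form in the Stieltjes polynomials (indices shifted to $2N_{j_2,\ldots,j_n}$) and substitute back into the sum. No difference in route and no gap.
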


 \begin{proof} Let the assumptions of theorem hold, Hence, any solution of the  problem $\bold{MP}(\mathbf{s}, 2\mathfrak{n}-1)$ admits the following asymptotic expansion 
 \begin{equation}\label{20p.4.14}
   F(z_1,z_2,...,z_n)=  \sum_{j_2,\ldots, j_n=0}^{2\mathfrak{n}-1} \cfrac{1}{z_2^{j_2+1}\cdot\ldots\cdot z_n^{j_n+1}}F_{j_2,\ldots, j_n}(z_1),
\end{equation}
  where   $F_{j_2,\ldots, j_n}$ depends on the variable $z_1$ and $\mathfrak{n}=n_{N_{j_2,...,j_n}}$, we get
  \[
  	F_{j_2,\ldots, j_n}(z_1)=-\cfrac{ \mathfrak{s}_{0,j_2,\ldots, j_n}}{z_1}-\ldots-\cfrac{ \mathfrak{s}_{2n_{N_{j_2,...,j_n}}-1,j_2,\ldots, j_n}}{z_1^{2n_{N_{j_2,...,j_n}}}}+o\left(\cfrac{1}{z_1^{2n_{N_{j_2,...,j_n}}}}\right).
  \]

By Theorem~\ref{14p.int.th_ind**} , we obtain
  \begin{equation}\label{20p.4.15}
  F_{j_2,\ldots, j_n}(z_1)=     \frac{1}{\displaystyle\! -z_1 m^{(j_2,\ldots, j_n)}_{1}(z_1)\!+\!\frac{1}{\displaystyle
    l^{(j_2,\ldots, j_n)}_{1}\!+\!\cdots\!+\!\frac{1}{\!l^{(j_2,\ldots, j_n)}_{N_{j_2,...,j_n}}(z_1)\!+\!\tau_{j_2,\ldots, j_n}(z_1)}
    }},
  \end{equation}
where the atoms $\left(m^{(j_2,\ldots, j_n)}_{j}, l^{(j_2,\ldots, j_n)}_{j}\right)$ are calculated by~\eqref{20p.4.3} and parameter $\tau_{j_2,\ldots, j_n}$ satisfies~\eqref{20p.4.12}. 
   Substituting~\eqref{20p.4.15} into \eqref{20p.4.14}, we get \eqref{20p.4.11}. 
   
On the other hand, $F_{j_2,\ldots, j_n}$ can be rewritten in terns of the Stiejtjes polynomials and by Theorem~\ref{14p.int.th_ind**}, we obtain
   \begin{equation}\label{20p.4.16}
  F_{j_2,\ldots, j_n}(z_1)=    \cfrac{Q^{+(j_2,\ldots, j_n)}_{2N_{j_2,\ldots, j_n}-1}(z_1)\tau_{j_2,\ldots, j_n}(z_1)\!+\!Q^{+(j_2,\ldots, j_n)}_{2N_{j_2,\ldots, j_n}}(z_1)}{P^{+(j_2,\ldots, j_n)}_{2N_{j_2,\ldots, j_n}-1}(z_1)\tau_{j_2,\ldots, j_n}(z_1)\!+\!P^{+(j_2,\ldots, j_n)}_{2N_{j_2,\ldots, j_n}}(z_1)},
    \end{equation}
  where  $P^{+(j_2,\ldots, j_n)}_{j}$ and $Q^{+(j_2,\ldots, j_n)}_{j}$  are the Stieltjes polynomials of the first and second kind, respectively, which are associated with the sequence $\mathfrak s_{j_2,\ldots, j_n}=\{ \mathfrak{s}_{i,j_2,\ldots, j_n}\}_{i=0}^{2\mathfrak{n}-1}$ and calculated by~\eqref{20p.4.5}--\eqref{20p.4.6} (or \eqref{20p.4.10}). Combining \eqref{20p.4.14} and \eqref{20p.4.16}, we obtain~\eqref{20p.4.11}. This completes the proof.~\end{proof}
  \begin{corollary}\label{20p.cor.4.5} Let $\mu$ be a nonnegative Borel measure on $\mathbb{R}_{+}^n$, where $\supp(\mu)=A\subseteq \mathbb{R}_{+}^n$ and let a moment sequence 
 $\mathbf{s}=\{s_{i_1,\ldots,i_n}\}_{i_1,\ldots, i_n=0}^{2\mathfrak{n}-1}$ be generated by \eqref{14.2.1}. Then
\begin{equation}\label{20p.4.10!!}\small
 \int_{\mathbb{R}_{+}^n}\cfrac{d\mu(t_1,..., t_n)}{1- \sum\limits_{i=1}^n \cfrac{t_i}{z_i}}=  -\!\! \!\! \sum_{j_2,\ldots, j_n=0}^{2\mathfrak{n}-1}  \cfrac{z_1}{\prod\limits_{i =2}^n\!z_i^{j_i} } \cfrac{Q^{+(j_2,\ldots, j_n)}_{2N_{j_2,\ldots, j_n}-1}(z_1)\tau_{j_2,\ldots, j_n}(z_1)\!+\!Q^{+(j_2,\ldots, j_n)}_{2N_{j_2,\ldots, j_n}}(z_1)}{P^{+(j_2,\ldots, j_n)}_{2N_{j_2,\ldots, j_n}-1}(z_1)\tau_{j_2,\ldots, j_n}(z_1)\!+\!P^{+(j_2,\ldots, j_n)}_{2N_{j_2,\ldots, j_n}}(z_1)},
 \end{equation}	  
  where  $\tau_{j_2,...,j_n}$ satisfies \eqref{20p.4.12} ,  $P^{+(j_2,\ldots, j_n)}_{j}$ and $Q^{+(j_2,\ldots, j_n)}_{j}$ are calculated by~\eqref{20p.4.5}--\eqref{20p.4.6}.
  \end{corollary}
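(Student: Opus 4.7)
The plan is to follow exactly the strategy used in the proof of Corollary~\ref{20p.cor.4.2}, just substituting the even-case representation provided by Theorem~\ref{20p.th.4.3} for the odd-case representation used there. The corollary is essentially a direct algebraic consequence of the theorem it is paired with, so no new analytic input is needed.

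First, I would invoke the integral representation \eqref{20.2.2} of the associated function $F$ (noting that since $\supp(\mu)\subseteq\mathbb{R}_+^n$ the integral makes sense for $z_i\widehat{\rightarrow}\infty$ in the relevant Stolz domains). Thus
\[
F(z_1,\ldots,z_n) = -\frac{1}{\prod_{i=1}^{n} z_i}\int_{\mathbb{R}_+^n}\cfrac{d\mu(t_1,\ldots,t_n)}{1-\sum_{i=1}^{n}\frac{t_i}{z_i}}.
\]
Second, I would apply Theorem~\ref{20p.th.4.3} to the sequence $\mathbf{s}=\{s_{i_1,\ldots,i_n}\}_{i_1,\ldots,i_n=0}^{2\mathfrak{n}-1}$ and get
\[
F(z_1,\ldots,z_n) = \sum_{j_2,\ldots,j_n=0}^{2\mathfrak{n}-1}\cfrac{1}{\prod_{i=2}^{n} z_i^{j_i+1}}\cfrac{Q^{+(j_2,\ldots,j_n)}_{2N_{j_2,\ldots,j_n}-1}(z_1)\tau_{j_2,\ldots,j_n}(z_1)+Q^{+(j_2,\ldots,j_n)}_{2N_{j_2,\ldots,j_n}}(z_1)}{P^{+(j_2,\ldots,j_n)}_{2N_{j_2,\ldots,j_n}-1}(z_1)\tau_{j_2,\ldots,j_n}(z_1)+P^{+(j_2,\ldots,j_n)}_{2N_{j_2,\ldots,j_n}}(z_1)},
\]
where the parameters $\tau_{j_2,\ldots,j_n}$ satisfy \eqref{20p.4.12} and the Stieltjes polynomials are those given by the system \eqref{20p.4.5}--\eqref{20p.4.6}.

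Third, equating the two expressions for $F$ and multiplying through by $-\prod_{i=1}^{n}z_i$ moves the full product of the $z_i$ into the numerator. The key algebraic simplification is
\[
\frac{\prod_{i=1}^{n} z_i}{\prod_{i=2}^{n} z_i^{j_i+1}} = \frac{z_1}{\prod_{i=2}^{n} z_i^{j_i}},
\]
which turns the sum into exactly the right-hand side of \eqref{20p.4.10!!}. This yields the claimed identity.

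Since every ingredient—the integral representation, the Stieltjes-polynomial decomposition of $F$, and the formula for the atoms—has already been established in Theorem~\ref{20p.th.4.3}, the only real work is the book-keeping of the monomial factor $\prod_i z_i$. There is no genuine obstacle: the proof is a one-line rearrangement modelled verbatim on Corollary~\ref{20p.cor.4.2}, with Theorem~\ref{20p.th.4.1} replaced by Theorem~\ref{20p.th.4.3} and the summation range and polynomial indices shifted accordingly to match the even (degree $2\mathfrak{n}-1$) case.
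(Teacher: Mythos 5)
Your proposal is correct and is essentially the intended argument: the paper leaves this corollary without a written proof precisely because it repeats the proof of Corollary~\ref{20p.cor.4.2} verbatim, with Theorem~\ref{20p.th.4.1} replaced by Theorem~\ref{20p.th.4.3} (formula \eqref{20p.4.13}) and the sign and monomial book-keeping $\prod_{i=1}^n z_i/\prod_{i=2}^n z_i^{j_i+1}=z_1/\prod_{i=2}^n z_i^{j_i}$ handled exactly as you do.
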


 \subsection{$\alpha$-regular case}
 In this subsection, we study the case when at least one  associated sequence   $\mathfrak s_{j_2,\ldots, j_n}=\{ \mathfrak{s}_{i,j_2,\ldots, j_n}\}_{i=0}^{\ell}$ is not regular. To do this, we use a simple approach based on the results of \cite{K20}.  
 
 \begin{definition}\label{20p.def4.4} (\cite{K20})  Let $\textbf{s}=\{s_j\}_{j=0}^{\ell}$  be a sequence of real numbers, let $\mathcal{N}(\textbf{s})=\{n_{j}\}_{j=1}^{N}$ be a set of normal indices of $\textbf{s}$ and $\alpha\in\mathbb{R}$.  $\textbf{s}=\{s_j\}_{j=0}^{\ell}$ is called  $\alpha-$regular sequence if
 \begin{equation}\label{20p.4.17}
 	P_{n_j}(\alpha)\neq 0\quad \mbox{for all } j=\overline{1, N}.
     \end{equation}

 \end{definition}
  \begin{theorem}\label{20p.th.4.5} Let $\mathbf{s}=\{s_{i_1,i_2,...,i_n}\}_{i_1,...,i_n=0}^{2\mathfrak{n}-1}$ be a sequence of real numbers and let the associated sequences   $\mathfrak s_{j_2,\ldots, j_n}=\{ \mathfrak{s}_{i,j_2,\ldots, j_n}\}_{i=0}^{2\mathfrak{n}-1}$ be  defined by~\eqref{21.1.13sw3}. Let $\mathcal{N}^{(j_2,...,j_n)}\left(\textbf{s}^{^{(j_2,...,j_n)}}\right)=\left\{n^{(j_2,...,j_n)}_{j}\right\}_{j=1}^{N_{j_2,...,j_n}}$ be the set of normal indices  of $\textbf{s}^{^{(j_2,...,j_n)}}$, where $n_{N_{j_2,...,j_n}}^{(j_2,...,j_n)}=\mathfrak{n}$ and let $A=
 \left\{\alpha_{j_2,...,j_n}\right\}_{j_2,...,j_n=0}^{2\mathfrak{n}-1}$ be a sequence of real numbers such that the associated sequence $ \mathfrak s_{j_2,\ldots, j_n}$ is $\alpha_{j_2,...,j_n}-$regular. Then
 any solution of the  problem $\bold{MP}(\mathbf{s}, 2\mathfrak{n}-1)$ takes the representation

  \begin{equation}\small\label{20p.4.18}\begin{split}
   &F(z_1,z_2,...,z_n)=  \sum_{j_2,\ldots, j_n=0}^{2\mathfrak{n}-1} \cfrac{1}{ \prod\limits_{i =2}^n\!z_i^{j_i+1}} \!\times\\&\!\times\!
      \frac{1}{\displaystyle\! -\!(z_1\!-\!\alpha_{j_2,...,j_n}) m^{A(j_2,\ldots, j_n)}_{1}\!(z_1)\!+\!\frac{1}{\displaystyle
    l^{A(j_2,\ldots, j_n)}_{1}\!+\!\cdots\!+\!\frac{1}{\!l^{A(j_2,\ldots, j_n)}_{N_{j_2,...,j_n}}\!(z_1)\!+\!\tau_{j_2,\ldots, j_n}(z_1)
    }}},
   \end{split}
 \end{equation}
  where the parameter $\tau_{j_2,\ldots, j_n}$ satisfies the following condition
 \begin{equation}\label{20p.4.19}
    \tau_{j_2,\ldots, j_n}(z_1)=o(1)
    \end{equation}
and atoms $\left(m^{A(j_2,\ldots, j_n)}_{j}, l^{A(j_2,\ldots, j_n)}_{j}\right)$ can be found   by
 \begin{equation}\label{20p.4.20}\begin{split}&
	d_1^{A(j_2,\ldots, j_n)}=\cfrac{1}{b_0^{(j_2,\ldots, j_n)}},\quad l_1^{A(j_2,\ldots, j_n)}=-\cfrac{1}{d_1^{(j_2,\ldots, j_n)}a_0^{(j_2,\ldots, j_n)}(\alpha_{j_2,...,j_n})},\\&
	d_j^{A(j_2,\ldots, j_n)}=\cfrac{1}{b_{j-1}^{(j_2,\ldots, j_n)}\left(l_{j-1}^{A(j_2,\ldots, j_n)}\right)^2 d_{j-1}^{A(j_2,\ldots, j_n)}},\\&
	l_j^{A(j_2,\ldots, j_n)}=-\cfrac{l_{j-1}^{A(j_2,\ldots, j_n)}}{1+l_{j-1}^{A(j_2,\ldots, j_n)}d_j^{A(j_2,\ldots, j_n)}a_{j-1}^{(j_2,\ldots, j_n)}(\alpha_{j_2,...,j_n})},\\&
	m_j^{A(j_2,\ldots, j_n)}(z_1)=\cfrac{\left(a_{j-1}^{(j_2,\ldots, j_n)}(z_1)-a_{j-1}^{(j_2,\ldots, j_n)}(\alpha_{j_2,...,j_n})\right)d_j^{A(j_2,\ldots, j_n)}}{z_1-\alpha_{j_2,...,j_n}},
\end{split}\end{equation}
where $d_j^{A(j_2,\ldots, j_n)}$ is the leading coefficient of the polynomial  $m_j^{A(j_2,\ldots, j_n)}$ and $\left(a^{(j_2,...,j_n)}_j,b^{(j_2,...,j_n)}_j\right)$ are defined by \eqref{20p.3.3}.

Furthermore, \eqref{20p.4.18} can be represented by
\begin{equation}\label{20p.4.21}
	F(z_1,z_2,...,z_n)\!=\!\! \!\! \sum_{j_2,\ldots, j_n=0}^{2\mathfrak{n}-1} \!\cfrac{1}{ \prod\limits_{i =2}^n\!\!z_i^{j_i+1}} \cfrac{Q^{+A(j_2,\ldots, j_n)}_{2N_{j_2,\ldots, j_n}-1}\!(z_1)\tau_{j_2,\ldots, j_n}(z_1)\!+\!Q^{+A(j_2,\ldots, j_n)}_{2N_{j_2,\ldots, j_n}}\!(z_1)}{P^{+A(j_2,\ldots, j_n)}_{2N_{j_2,\ldots, j_n}-1}\!(z_1)\tau_{j_2,\ldots, j_n}(z_1)\!+\!P^{+A(j_2,\ldots, j_n)}_{2N_{j_2,\ldots, j_n}}\!(z_1)},
\end{equation}
where $P^{+A(j_2,\ldots, j_n)}_{j}$ and $Q^{+A(j_2,\ldots, j_n)}_{j}$ are the Stieltjes polynomials with the shift $\alpha_{j_2,...,j_n}$ of the first and second kind associated with the sequence  $\mathfrak s_{j_2,\ldots, j_n}=\{ \mathfrak{s}_{i,j_2,\ldots, j_n}\}_{i=0}^{2\mathfrak{n}-1}$, respectively, $P^{+A(j_2,\ldots, j_n)}_{j}$ and $Q^{+A(j_2,\ldots, j_n)}_{j}$ are the solutions of the following system 
\begin{equation}\label{20p.4.22}
  \left\{
    \begin{array}{rcl}
    y_{2j}^{A(j_2,\ldots, j_n)}(z_1)-y^{A(j_2,\ldots, j_n)}_{2j-2}(z_1)=l^{A(j_2,\ldots, j_n)}_{j}(z_1)y^{A(j_2,\ldots, j_n)}_{2j-1}(z_1)\,,\\
    y^{A(j_2,\ldots, j_n)}_{2j+1}\!(z_1)\!-\!y^{A(j_2,\ldots, j_n)}_{2j-1}\!(z_1)\!=\!-\!(z_1-\alpha_{j_2,...,j_n})m^{A(j_2,\ldots, j_n)}_{j+1}\!(z_1)y^{(j_2,\ldots, j_n)}_{2j}\!(z_1)\\
    \end{array}\right.
\end{equation}
subject to the initial conditions
\begin{equation}\label{20p.4.23}\begin{split}&
   Q^{+A(j_2,\ldots, j_n)}_{-1}(z_1)\equiv1\quad\mbox{and}\quad Q^{+A(j_2,\ldots, j_n)}_{0}(z_1)\equiv0, \\&P^{+A(j_2,\ldots, j_n)}_{-1}(z_1)\equiv0\quad\mbox{and}\quad P^{+A(j_2,\ldots, j_n)}_{0}(z_1)\equiv1.
\end{split}\end{equation}
 \end{theorem}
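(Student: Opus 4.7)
The plan is to mimic the proof of Theorem~\ref{20p.th.4.3} almost verbatim, substituting the regular one-dimensional result (Theorem~\ref{14p.int.th_ind**}) with its $\alpha$-regular counterpart from~\cite{K20}. First I would invoke the decomposition~\eqref{20.2.3wq1}: any solution of $\bold{MP}(\mathbf{s}, 2\mathfrak{n}-1)$ admits the asymptotic expansion
\[
F(z_1,z_2,\ldots,z_n)= \sum_{j_2,\ldots, j_n=0}^{2\mathfrak{n}-1} \cfrac{F_{j_2,\ldots, j_n}(z_1)}{z_2^{j_2+1}\cdot\ldots\cdot z_n^{j_n+1}},
\]
where each $F_{j_2,\ldots,j_n}$ depends only on $z_1$ and is an associated function of the sequence $\mathfrak{s}_{j_2,\ldots,j_n}=\{\mathfrak{s}_{i,j_2,\ldots,j_n}\}_{i=0}^{2\mathfrak{n}-1}$ with normal index set $\mathcal{N}^{(j_2,\ldots,j_n)}$ and top normal index $\mathfrak{n}$.

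Next, for each fixed multi-index $(j_2,\ldots,j_n)$, the hypothesis that $\mathfrak{s}_{j_2,\ldots,j_n}$ is $\alpha_{j_2,\ldots,j_n}$-regular in the sense of Definition~\ref{20p.def4.4} is exactly what is needed to run the shifted Schur-algorithm variant of Theorem~\ref{14p.int.th_ind**} from~\cite{K20}. Concretely, one makes the change of variable $w = z_1 - \alpha_{j_2,\ldots,j_n}$, which transports the non-degeneracy condition $P_{n_j}(\alpha_{j_2,\ldots,j_n})\neq 0$ into the ordinary regularity condition $\widetilde{P}_{n_j}(0)\neq 0$ for the translated sequence; then Theorem~\ref{14p.int.th_ind**} applies and gives the $\mathbf{S}$-fraction representation~\eqref{5p.eq:f1} in the $w$ variable, whose atoms are computed from~\eqref{eqimnm1:} with $a_{j-1}(0)$ replaced by $a_{j-1}^{(j_2,\ldots,j_n)}(\alpha_{j_2,\ldots,j_n})$ and with $z_1$ replaced by $z_1-\alpha_{j_2,\ldots,j_n}$. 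Translating back, this is precisely~\eqref{20p.4.20}, and inserting the resulting expression into the decomposition above yields~\eqref{20p.4.18}.

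To obtain the polynomial representation~\eqref{20p.4.21}, I would introduce the shifted Stieltjes polynomials of the first and second kind as solutions of the three-term type recurrence~\eqref{20p.4.22} subject to~\eqref{20p.4.23} and argue, exactly as in Theorem~\ref{2p.pr.alg1} and \eqref{5p.eq:3.6*9x}--\eqref{5p.eq:3.6*9xx}, that the convergents of the shifted $\mathbf{S}$-fraction~\eqref{20p.4.18} are the ratios $Q^{+A}_{2N-1}/P^{+A}_{2N-1}$ and $Q^{+A}_{2N}/P^{+A}_{2N}$, so that the standard linear-fractional parametrization of continued fractions gives~\eqref{20p.4.21}; substituting back into the decomposition completes the proof. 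The only real work, and hence the main obstacle, is the verification that the $\alpha$-shifted contraction of the $\mathbf{P}$-fraction~\eqref{20p.3.6} produces a bona fide $\mathbf{S}$-fraction with leading coefficients $d_j^{A(j_2,\ldots,j_n)}$ and linear factors $l_j^{A(j_2,\ldots,j_n)}$ as in~\eqref{20p.4.20}; this is the content of the algorithm in~\cite{K20}, and once it is cited the remainder of the argument is a direct transcription of the proof of Theorem~\ref{20p.th.4.3}.
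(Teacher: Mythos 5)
Your proposal is correct and follows essentially the same route as the paper: decompose $F$ into the one-variable components $F_{j_2,\ldots,j_n}$, shift by $\alpha_{j_2,\ldots,j_n}$ so that $\alpha$-regularity becomes ordinary regularity at $0$, apply the regular \textbf{S}-fraction result, and translate back to obtain \eqref{20p.4.18}--\eqref{20p.4.23}. The only cosmetic difference is that the paper invokes the multidimensional regular Theorem~\ref{20p.th.4.3} for the shifted functions while you apply the one-dimensional Theorem~\ref{14p.int.th_ind**} componentwise (together with the shifted algorithm of \cite{K20}), which amounts to the same argument.
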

\begin{proof} Assume the assumptions of statement hold. Hence, any solution of  $\bold{MP}(\mathbf{s}, 2\mathfrak{n}-1)$  admits  asymptotic expansion
\begin{equation}\label{20p.4.24}
   F(z_1,z_2,...,z_n)=  \sum_{j_2,\ldots, j_n=0}^{2\mathfrak{n}-1} \cfrac{1}{ \prod\limits_{i =2}^n\!\!z_i^{j_i+1}}F_{j_2,\ldots, j_n}(z_1),
\end{equation}
where $F_{j_2,\ldots, j_n}$ is associated with    $\mathfrak s_{j_2,\ldots, j_n}=\{ \mathfrak{s}_{i,j_2,\ldots, j_n}\}_{i=0}^{2\mathfrak{n}-1}$  and 
  \[
  	F_{j_2,\ldots, j_n}(z_1)=-\cfrac{ \mathfrak{s}_{0,j_2,\ldots, j_n}}{z_1}-\ldots-\cfrac{ \mathfrak{s}_{2n_{N_{j_2,\ldots, j_n}}-1,j_2,\ldots, j_n}}{z_1^{2n_{N_{j_2,\ldots, j_n}}}}+o\left(\cfrac{1}{z_1^{2n_{N_{j_2,\ldots, j_n}}}}\right).
  \]
  
 Due to  $\mathfrak s_{j_2,\ldots, j_n}=\{ \mathfrak{s}_{i,j_2,\ldots, j_n}\}_{i=0}^{2n_{N_{j_2,\ldots, j_n}}-1}$ is  $\alpha_{j_2,...,j_n}$ -- regular sequence and by Definition \ref{20p.def4.4}, we get
 \[
	 P^{(j_2,\ldots, j_n)}_{n^{(j_2,\ldots, j_n)}_j}(\alpha_{j_2,...,j_n})\neq 0\quad \mbox{for all } j=\overline{1, N_{j_2,...,j_n}}.
 \]

Moreover, by Theorem \ref{20p.th.3.1},  $F$ takes the form \eqref{20p.3.1}--\eqref{20p.3.2} and  we shift $F_{j_2,\ldots, j_n}$ by $\alpha_{j_2,...,j_n}$, we obtain
 \begin{equation}\label{20p.4.25}
 F_{j_2,\ldots, j_n}(z_1+\alpha_{j_2,...,j_n})=   \underset{i=0}{\overset{N_{j_2,...,j_n}-1}{\mathbf{K}}}\left(-\cfrac{b_{i}^{(j_2,...,j_n)}}{a_{i}^{(j_2,...,j_n)}(z_1+\alpha_{j_2,...,j_n})}\right).
 \end{equation}
Setting  $\tilde{F}_{j_2,\ldots, j_n}(z_1)= F_{j_2,\ldots, j_n}(z_1+\alpha_{j_2,...,j_n})$, $ \tilde{P}^{(j_2,\ldots, j_n)}_{n^{(j_2,\ldots, j_n)}_j}(z_1)=P^{(j_2,\ldots, j_n)}_{n^{(j_2,\ldots, j_n)}_j}(z_1+\alpha_{j_2,...,j_n})$ and  $\tilde{a}_{i}^{(j_2,...,j_n)}(z_1)=a_{i}^{(j_2,...,j_n)}(z_1+\alpha_{j_2,...,j_n})$, then  all polynomials  $ \tilde{P}^{(j_2,\ldots, j_n)}_{n^{(j_2,\ldots, j_n)}_j}$ do not vanish at $0$, i.e. $ \tilde{P}^{(j_2,\ldots, j_n)}_{n^{(j_2,\ldots, j_n)}_j}(0)\neq0$ and
 \begin{equation}\label{20p.4.26}
 \tilde{F}_{j_2,\ldots, j_n}(z_1)=   \underset{i=0}{\overset{N_{j_2,...,j_n}-1}{\mathbf{K}}}\left(-\cfrac{b_{i}^{(j_2,...,j_n)}}{\tilde{a}_{i}^{(j_2,...,j_n)}(z_1)}\right)
 \end{equation}
 is associated with a regular  sequence  $\tilde{\mathfrak s}_{j_2,\ldots, j_n}=\{ \tilde{\mathfrak s}_{i,j_2,\ldots, j_n}\}_{i=0}^{2n_{N_{j_2,\ldots, j_n}}-1}$. 
 
 By Theorem~\ref{20p.th.4.3}, we obtain
  \begin{equation}\footnotesize\label{20p.4.27}\begin{split}
   & \tilde{F}_{j_2,\ldots, j_n}(z_1)= 
      \frac{1}{\displaystyle\! -z_1 \tilde{m}^{(j_2,\ldots, j_n)}_{1}(z_1)\!+\!\frac{1}{\displaystyle
    l^{A(j_2,\ldots, j_n)}_{1}\!+\!\cdots\!+\!\frac{1}{\!{l}^{A(j_2,\ldots, j_n)}_{N_{j_2,...,j_n}}(z_1)\!+\!\tau_{j_2,\ldots, j_n}(z_1)}
    }}=\\&=
     \cfrac{\tilde{Q}^{+(j_2,\ldots, j_n)}_{2N_{j_2,\ldots, j_n}-1}(z_1)\tau_{j_2,\ldots, j_n}(z_1)\!+\!\tilde{Q}^{+(j_2,\ldots, j_n)}_{2N_{j_2,\ldots, j_n}}(z_1)}{\tilde{P}^{+(j_2,\ldots, j_n)}_{2N_{j_2,\ldots, j_n}-1}(z_1)\tau_{j_2,\ldots, j_n}(z_1)\!+\!\tilde{P}^{+(j_2,\ldots, j_n)}_{2N_{j_2,\ldots, j_n}}(z_1)},
   \end{split}
 \end{equation}
  where the parameter $\tau_{j_2,\ldots, j_n}$ satisfies the following
 \begin{equation}\label{20p.4.28}
    \tau_{j_2,\ldots, j_n}(z_1)=o(1),
  \end{equation}
the atoms $\left(\tilde{m}^{(j_2,\ldots, j_n)}_{j},l^{A(j_2,\ldots, j_n)}_{j}\right)$ can be calculated    by 
\begin{equation}\label{20p.4.29}\begin{split}&
	d_1^{(j_2,\ldots, j_n)}=\cfrac{1}{b_0^{(j_2,\ldots, j_n)}},\quad l_1^{A(j_2,\ldots, j_n)}=-\cfrac{1}{d_1^{(j_2,\ldots, j_n)}\tilde{a}_0^{(j_2,\ldots, j_n)}(0)},\\&
	d_j^{(j_2,\ldots, j_n)}=\cfrac{1}{b_{j-1}^{(j_2,\ldots, j_n)}\left(l_{j-1}^{A(j_2,\ldots, j_n)}\right)^2 d_{j-1}^{(j_2,\ldots, j_n)}},\\&
	l_j^{A(j_2,\ldots, j_n)}=-\cfrac{l_{j-1}^{A(j_2,\ldots, j_n)}}{1+l_{j-1}^{A(j_2,\ldots, j_n)}d_j^{(j_2,\ldots, j_n)}\tilde{a}_{j-1}^{(j_2,\ldots, j_n)}(0)},\\&
	\tilde{m}_j^{(j_2,\ldots, j_n)}(z_1)=\cfrac{\left(\tilde{a}_{j-1}^{(j_2,\ldots, j_n)}(z_1)-\tilde{a}_{j-1}^{(j_2,\ldots, j_n)}(0)\right)d_j^{(j_2,\ldots, j_n)}}{z_1},
\end{split}\end{equation}
 $\tilde{P}^{+(j_2,\ldots, j_n)}_{j}$ and $\tilde{Q}^{+(j_2,\ldots, j_n)}_{j}$ are the solutions of the  similar system~\eqref{20p.4.5}--\eqref{20p.4.6}.
 
 Using backward substitution, i.e. 
 \[\begin{split}
 &F_{j_2,\ldots, j_n}(z_1)=\tilde{F}_{j_2,\ldots, j_n}(z_1-\alpha_{j_2,...,j_n}),\\&{m}_j^{A(j_2,\ldots, j_n)}(z_1)=\tilde{m}_j^{(j_2,\ldots, j_n)}(z_1-\alpha_{j_2,...,j_n}),\\&
{P}^{+A(j_2,\ldots, j_n)}_{j}(z_1)= \tilde{P}^{+(j_2,\ldots, j_n)}_{j}(z_1-\alpha_{j_2,...,j_n}),\\&
{Q}^{+A(j_2,\ldots, j_n)}_{j}(z_1)= \tilde{Q}^{+(j_2,\ldots, j_n)}_{j}(z_1-\alpha_{j_2,...,j_n}),
\end{split} \]
 we obtain \eqref{20p.4.18}--\eqref{20p.4.23}. This completes the proof.~\end{proof}
 \begin{theorem}\label{20p.th.4.6} Let $\mathbf{s}=\{s_{i_1,i_2,...,i_n}\}_{i_1,...,i_n=0}^{2\mathfrak{n}-2}$ be a sequence of real numbers and let the associated sequences   $\mathfrak s_{j_2,\ldots, j_n}\!=\!\{\! \mathfrak{s}_{i,j_2,\ldots, j_n}\}_{i=0}^{2\mathfrak{n}-2}$ be  defined by~\eqref{21.1.13sw3}. Let $\mathcal{N}^{(j_2,...,j_n)}\!\left(\!\textbf{s}^{^{(j_2,...,j_n)}}\!\right)\!\!=\!\left\{n^{(j_2,...,j_n)}_{j}\right\}_{j=1}^{N_{j_2,...,j_n}}$ be the set of normal indices  of $\textbf{s}^{^{(j_2,...,j_n)}}$, where $n_{N_{j_2,...,j_n}}^{(j_2,...,j_n)}=\mathfrak{n}$ and let $A=
 \left\{\alpha_{j_2,...,j_n}\right\}_{j_2,...,j_n=0}^{2\mathfrak{n}-2}$ be a sequence of real numbers such that the associated sequence $ \mathfrak s_{j_2,\ldots, j_n}$ is $\alpha_{j_2,...,j_n}-$regular. Then
 any solution of the  problem $\bold{MP}(\mathbf{s}, 2\mathfrak{n}-2)$ admits the representation

  \begin{equation}\footnotesize\label{20p.4.30}\begin{split}
   &F(z_1,z_2,...,z_n)=  \sum_{j_2,\ldots, j_n=0}^{2\mathfrak{n}-2} \cfrac{1}{ \prod\limits_{i =2}^n\!z_i^{j_i+1}} \!\times\\&\!\times\!
      \frac{1}{\displaystyle\! -\!(z_1\!-\!\alpha_{j_2,...,j_n}) m^{A(j_2,\ldots, j_n)}_{1}\!(z_1)\!+\!\frac{1}{\displaystyle
    l^{A(j_2,\ldots, j_n)}_{1}\!+\!\cdots\!+\!\frac{1}{\! -\!(z_1\!-\!\alpha_{j_2,...,j_n}) m^{A(j_2,\ldots, j_n)}_{N_{j_2,...,j_n}}\!(z_1)\!+\!\tau^{-1}_{j_2,\ldots, j_n}(z_1)
    }}},
   \end{split}
 \end{equation}
  where the parameter $\tau_{j_2,\ldots, j_n}$ satisfies the following condition
 \begin{equation}\label{20p.4.31}
   \frac{1}{ \tau_{j_2,\ldots, j_n}(z_1)}=o(z_1)
    \end{equation}
and atoms $\left(m^{A(j_2,\ldots, j_n)}_{j}, l^{A(j_2,\ldots, j_n)}_{j}\right)$ can be calculated by \eqref{20p.4.20}.

Furthermore, \eqref{20p.4.30} can be represented by
\begin{equation}\label{20p.4.32}
	F(z_1,z_2,...,z_n)\!=\!\! \!\! \sum_{j_2,\ldots, j_n=0}^{2\mathfrak{n}-2} \!\cfrac{1}{ \prod\limits_{i =2}^n\!\!z_i^{j_i+1}} \cfrac{Q^{+A(j_2,\ldots, j_n)}_{2N_{j_2,\ldots, j_n}-1}\!(z_1)\tau_{j_2,\ldots, j_n}(z_1)\!+\!Q^{+A(j_2,\ldots, j_n)}_{2N_{j_2,\ldots, j_n}-2}\!(z_1)}{P^{+A(j_2,\ldots, j_n)}_{2N_{j_2,\ldots, j_n}-1}\!(z_1)\tau_{j_2,\ldots, j_n}(z_1)\!+\!P^{+A(j_2,\ldots, j_n)}_{2N_{j_2,\ldots, j_n}-2}\!(z_1)},
\end{equation}
where  $P^{+A(j_2,\ldots, j_n)}_{j}$ and $Q^{+A(j_2,\ldots, j_n)}_{j}$ are the Stieltjes polynomials with the shift $\alpha_{j_2,...,j_n}$ defined \eqref{20p.4.22}--\eqref{20p.4.23}.

 \end{theorem}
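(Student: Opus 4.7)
The plan is to mimic the proof of Theorem~\ref{20p.th.4.5}, replacing the invocation of the regular even case (Theorem~\ref{20p.th.4.3}) by the regular odd case (Theorem~\ref{20p.th.4.1}). First, by the decomposition \eqref{20.2.3wq1} any solution of $\bold{MP}(\mathbf{s},2\mathfrak{n}-2)$ splits as
\[
F(z_1,\ldots,z_n)=\sum_{j_2,\ldots,j_n=0}^{2\mathfrak{n}-2}\cfrac{F_{j_2,\ldots,j_n}(z_1)}{\prod_{i=2}^{n}z_i^{j_i+1}},
\]
where each $F_{j_2,\ldots,j_n}$ carries the one-dimensional expansion \eqref{20.2.3wq2} of length $2\mathfrak{n}-1$ and is therefore a solution of $MP(\mathfrak{s}_{j_2,\ldots,j_n},2\mathfrak{n}-2)$ associated with the sequence $\mathfrak{s}_{j_2,\ldots,j_n}=\{\mathfrak{s}_{i,j_2,\ldots,j_n}\}_{i=0}^{2\mathfrak{n}-2}$.

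Next I would shift each fiber by $\alpha_{j_2,\ldots,j_n}$, setting $\tilde{F}_{j_2,\ldots,j_n}(z_1)=F_{j_2,\ldots,j_n}(z_1+\alpha_{j_2,\ldots,j_n})$ together with $\tilde{a}_i^{(j_2,\ldots,j_n)}(z_1)=a_i^{(j_2,\ldots,j_n)}(z_1+\alpha_{j_2,\ldots,j_n})$ and $\tilde{P}_{n_j^{(j_2,\ldots,j_n)}}^{(j_2,\ldots,j_n)}(z_1)=P_{n_j^{(j_2,\ldots,j_n)}}^{(j_2,\ldots,j_n)}(z_1+\alpha_{j_2,\ldots,j_n})$. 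The $\alpha_{j_2,\ldots,j_n}$-regularity of $\mathfrak{s}_{j_2,\ldots,j_n}$ (Definition~\ref{20p.def4.4}) gives $\tilde{P}_{n_j^{(j_2,\ldots,j_n)}}^{(j_2,\ldots,j_n)}(0)\neq 0$ for every $j$, so $\tilde{F}_{j_2,\ldots,j_n}$ is the associated function of a regular sequence in the sense of Definition~\ref{def:3p.4.3}, still truncated at the odd level $2\mathfrak{n}-2$.

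I would then apply Theorem~\ref{20p.th.4.1} to each $\tilde{F}_{j_2,\ldots,j_n}$ to obtain its $\textbf{S}$-fraction and its Stieltjes polynomial representation, with atoms $(\tilde{m}_j^{(j_2,\ldots,j_n)},l_j^{A(j_2,\ldots,j_n)})$ computed via \eqref{20p.4.3} from $\tilde{a}_{j-1}^{(j_2,\ldots,j_n)}$ evaluated at $0$, polynomials $\tilde{P}_j^{+(j_2,\ldots,j_n)}$, $\tilde{Q}_j^{+(j_2,\ldots,j_n)}$ satisfying \eqref{20p.4.5}--\eqref{20p.4.6}, and parameter normalization $1/\tau_{j_2,\ldots,j_n}(z_1)=o(z_1)$. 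Undoing the shift via $m_j^{A(j_2,\ldots,j_n)}(z_1)=\tilde{m}_j^{(j_2,\ldots,j_n)}(z_1-\alpha_{j_2,\ldots,j_n})$, $P_j^{+A(j_2,\ldots,j_n)}(z_1)=\tilde{P}_j^{+(j_2,\ldots,j_n)}(z_1-\alpha_{j_2,\ldots,j_n})$, and analogously for $Q_j^{+A(j_2,\ldots,j_n)}$ and $F_{j_2,\ldots,j_n}$, turns the factor $-z_1\tilde{m}_j^{(j_2,\ldots,j_n)}(z_1)$ into $-(z_1-\alpha_{j_2,\ldots,j_n})m_j^{A(j_2,\ldots,j_n)}(z_1)$ and the recursion \eqref{20p.4.5}--\eqref{20p.4.6} into \eqref{20p.4.22}--\eqref{20p.4.23}; reassembling the outer sum then yields \eqref{20p.4.30}--\eqref{20p.4.32}.

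The main obstacle is verifying that the shifted recursion matches \eqref{20p.4.20} and \eqref{20p.4.22}: one must check that applying \eqref{20p.4.3} to $\tilde{a}_{j-1}^{(j_2,\ldots,j_n)}$ at $z_1=0$ coincides, after the backward substitution, with evaluating $a_{j-1}^{(j_2,\ldots,j_n)}$ at $\alpha_{j_2,\ldots,j_n}$ as prescribed by \eqref{20p.4.20}, and analogously that \eqref{20p.4.22} is the shifted image of the regular system \eqref{20p.4.5}. Once this elementary but notationally heavy identification is in place, the rest of the argument follows verbatim from the proof of Theorem~\ref{20p.th.4.5}, so the only non-routine step is this translation between the regular model at level $2\mathfrak{n}-2$ and its $\alpha$-regular counterpart.
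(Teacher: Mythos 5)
Your argument is correct, but it is not quite the route the paper takes for this particular theorem. After the fiber decomposition (identical in both), the paper's proof of Theorem~\ref{20p.th.4.6} does not carry out the shift at all: it observes that each $F_{j_2,\ldots,j_n}$ solves a one-dimensional truncated problem for an $\alpha_{j_2,\ldots,j_n}$-regular sequence and then directly quotes the one-dimensional results of \cite{K20} (Theorems 5.1 and 5.5 there) to obtain the shifted \textbf{S}-fraction \eqref{20p.4.34} and the Stieltjes-polynomial representation \eqref{20p.4.35}, and substitutes these back into the sum. You instead re-derive that one-dimensional $\alpha$-regular odd-case input internally: shift each fiber by $\alpha_{j_2,\ldots,j_n}$, use that $\alpha$-regularity of $\mathfrak s_{j_2,\ldots,j_n}$ is exactly regularity of the shifted sequence (since $\tilde P_{n_j}(0)=P_{n_j}(\alpha_{j_2,\ldots,j_n})\neq0$), apply the regular odd-case result (in substance the one-dimensional Theorem~\ref{2p.pr.alg1} rather than the multidimensional Theorem~\ref{20p.th.4.1}, a harmless imprecision the paper itself commits in the proof of Theorem~\ref{20p.th.4.5}), and undo the shift --- which is precisely the mechanism the paper uses for the preceding Theorem~\ref{20p.th.4.5}. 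Your route buys self-containedness, needing only the regular-case theorems already stated in the paper, at the price of the routine but notationally heavy check that \eqref{20p.4.3} and \eqref{20p.4.5}--\eqref{20p.4.6} transform into \eqref{20p.4.20} and \eqref{20p.4.22}--\eqref{20p.4.23} under $z_1\mapsto z_1-\alpha_{j_2,\ldots,j_n}$, together with the (tacit, also in the paper) facts that the shifted fiber is the associated function of the shifted sequence truncated at the same level and that the normalization $1/\tau_{j_2,\ldots,j_n}(z_1)=o(z_1)$ is unaffected by the shift; the paper's citation of \cite{K20} avoids this bookkeeping but relies on external results.
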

 
 \begin{proof} Let the assumptions of  theorem hold. The proof  is based on the same approach as in the previous theorem. Any solution of  $\bold{MP}(\mathbf{s}, 2\mathfrak{n}-2)$  takes the form
\begin{equation}\label{20p.4.33}
   F(z_1,z_2,...,z_n)=  \sum_{j_2,\ldots, j_n=0}^{2\mathfrak{n}-2} \cfrac{1}{z_2^{j_2+1}\cdot\ldots\cdot z_n^{j_n+1}}F_{j_2,\ldots, j_n}(z_1),
\end{equation}
where $F_{j_2,\ldots, j_n}$ is associated with     $\alpha_{j_2,...,j_n}$ -- regular sequence $\mathfrak s_{j_2,\ldots, j_n}=\{ \mathfrak{s}_{i,j_2,\ldots, j_n}\}_{i=0}^{2\mathfrak{n}-2}$. By \cite[Theorem 5.1]{K20}, we obtain
\begin{equation}\label{20p.4.34}\footnotesize\begin{split}&
F_{j_2,\ldots, j_n}(z_1)=  \\&= \frac{1}{\displaystyle\! -\!(z_1\!-\!\alpha_{j_2,...,j_n}) m^{A(j_2,\ldots, j_n)}_{1}\!(z_1)\!+\!\cdots\!+\!\frac{1}{\! -\!(z_1\!-\!\alpha_{j_2,...,j_n}) m^{A(j_2,\ldots, j_n)}_{N_{j_2,...,j_n}}\!(z_1)\!+\!\tau^{-1}_{j_2,\ldots, j_n}(z_1)
    }},\end{split}
\end{equation}
where  $\left(m^{A(j_2,\ldots, j_n)}_{j}, l^{A(j_2,\ldots, j_n)}_{j}\right)$ can be found by \eqref{20p.4.20} and $\tau_{j_2,\ldots, j_n}$ satisfies~\eqref{20p.4.31}. On the other hand, \cite[Theorem 5.5]{K20}, we get
\begin{equation}\label{20p.4.35}
F_{j_2,\ldots, j_n}(z_1)=\cfrac{Q^{+A(j_2,\ldots, j_n)}_{2N_{j_2,\ldots, j_n}-1}\!(z_1)\tau_{j_2,\ldots, j_n}(z_1)\!+\!Q^{+A(j_2,\ldots, j_n)}_{2N_{j_2,\ldots, j_n}-2}\!(z_1)}{P^{+A(j_2,\ldots, j_n)}_{2N_{j_2,\ldots, j_n}-1}\!(z_1)\tau_{j_2,\ldots, j_n}(z_1)\!+\!P^{+A(j_2,\ldots, j_n)}_{2N_{j_2,\ldots, j_n}-2}\!(z_1)}, 
\end{equation}
where $P^{+A(j_2,\ldots, j_n)}_{j}$ and $Q^{+A(j_2,\ldots, j_n)}_{j}$ are the Stieltjes polynomials with the shift $\alpha_{j_2,...,j_n}$, which  can be calculated by \eqref{20p.4.22}--\eqref{20p.4.23}.
Substituting \eqref{20p.4.34}--\eqref{20p.4.35} into \eqref{20p.4.33}, we obtain \eqref{20p.4.30}--\eqref{20p.4.32}. This completes the proof.~\end{proof}

\begin{remark} Similarly\cite{K20},  $P^{+A(j_2,\ldots, j_n)}_{j}$ and $Q^{+A(j_2,\ldots, j_n)}_{j}$ can be represented in terms of the polynomials   $P^{(j_2,\ldots, j_n)}_{j}$ and $Q^{(j_2,\ldots, j_n)}_{j}$ by
\begin{equation}\label{20p.4.36}\small
    \begin{split}
        &P^{+A(j_2,\ldots, j_n)}_{-1}(z_1)\equiv0\quad\mbox{and}\quad P^{+A(j_2,\ldots, j_n)}_{0}(z_1)\equiv1,\,\\&
        Q^{+A(j_2,\ldots, j_n)}_{-1}(z_1)\equiv1\quad\mbox{and}\quad  Q^{+A(j_2,\ldots, j_n)}_{0}(z_1)\equiv0,
        \\
        &P^{+A(j_2,\ldots, j_n)}_{2i-1}(z_1)=\frac{-1}{b^{(j_2,\ldots, j_n)}_{0}\ldots b^{(j_2,\ldots, j_n)}_{i-1}}
        \begin{vmatrix}
            P^{(j_2,\ldots, j_n)}_{n_{i}}(z_1) & P^{(j_2,\ldots, j_n)}_{n_{i-1}}(z_1) \\
            P^{(j_2,\ldots, j_n)}_{n_{i}}(\alpha_{j_2,...,j_n}) & P^{(j_2,\ldots, j_n)}_{n_{i-1}}(\alpha_{j_2,...,j_n})\\
        \end{vmatrix},\\&
        Q_{2i-1}^{+A(j_2,\ldots, j_n)}(z_1)=\frac{1}{b^{(j_2,\ldots, j_n)}_{0}\ldots b^{(j_2,\ldots, j_n)}_{i-1}}
        \begin{vmatrix}
            Q^{(j_2,\ldots, j_n)}_{n_{i}}(z_1) & Q^{(j_2,\ldots, j_n)}_{n_{i-1}}(z_1) \\
            P^{(j_2,\ldots, j_n)}_{n_{i}}(\alpha_{j_2,...,j_n}) & P^{(j_2,\ldots, j_n)}_{n_{i-1}}(\alpha_{j_2,...,j_n})\\
        \end{vmatrix},\\&  P^{+A(j_2,\ldots, j_n)}_{2i}(z_1)=\frac{P^{(j_2,\ldots, j_n)}_{n_{i}}(z_1)}{P^{(j_2,\ldots, j_n)}_{n_{i}}(\alpha_{j_2,...,j_n})},\,
        Q^{+A(j_2,\ldots, j_n)}_{2i}(z_1)=-\frac{Q^{(j_2,\ldots, j_n)}_{n_{i}}(z_1)}{P^{(j_2,\ldots, j_n)}_{n_{i}}(\alpha_{j_2,...,j_n})}.
    \end{split}
\end{equation}

\end{remark}

 \section{Full problem}
 
 In this section, we study  a full problem, which can be formulated by
 
 Given a sequence of real numbers  $\mathbf{s}=\{s_{i_1,i_2,...,i_n}\}_{i_1,...,i_n=0}^{\infty}$. Find a description of a set of functions F such that
 \begin{equation}\label{20p. 5.1}
 	 F(z_1,z_2,...,z_n)=  -\sum_{k=0}^{\infty} \sum_{\begin{matrix} 
  \alpha_i^k\geq0  \\
  \alpha_1^k+\ldots \alpha_n^k=k \\
   \end{matrix} }\binom{k}{\alpha_1^k, \ldots, \alpha_n^k}\cfrac{s_{\alpha_1^k,\alpha_2^k,\ldots, \alpha_n^k}}{z_1^{\alpha_1^k+1}z_2^{\alpha_2^k+1}\ldots z_n^{\alpha_n^k+1}}.
  \end{equation}
 
On the other hand, \eqref{20p. 5.1} can be represented in terms of the associated sequences $\mathfrak s_{j_2,\ldots, j_n}=\{ \mathfrak{s}_{i,j_2,\ldots, j_n}\}_{i=0}^{\infty}$ as follows
  \begin{equation}\label{20p. 5.2}\begin{split}
   	 F(z_1,z_2,...,z_n)&=  -\sum_{j_2,\ldots, j_n=0}^\infty\cfrac{1}{z_2^{j_2+1}\cdot\ldots\cdot z_n^{j_n+1}}F_{j_2,\ldots, j_n}(z_1),  
   \end{split}
 \end{equation}
  where   $F_{j_2,\ldots, j_n}$  admits the  asymptotic expansion
    \begin{equation}\label{20p. 5.3}
    	F_{j_2,\ldots, j_n}(z_1)=-\cfrac{ \mathfrak{s}_{0,j_2,\ldots, j_n}}{z_1}-\cfrac{ \mathfrak{s}_{1,j_2,\ldots, j_n}}{z_1^2}-\ldots-\cfrac{ \mathfrak{s}_{\ell,j_2,\ldots, j_n}}{z_1^{\ell+1}}+\ldots=-\sum_{j=0}^{\infty}\cfrac{ \mathfrak{s}_{j,j_2,\ldots, j_n}}{z^{j+1}_1}
     \end{equation}
and $ \mathfrak{s}_{j,j_2,\ldots, j_n}$ are defined by \eqref{21.1.13sw3}. The full problem is denoted by  $\bold{MP}(\mathbf{s})$.

  \begin{theorem}\label{20p.th.5.1} Let $\mathbf{s}=\{s_{i_1,i_2,...,i_n}\}_{i_1,...,i_n=0}^{\infty}$ be a sequence of real numbers and let  all associated sequences $\mathfrak s_{j_2,\ldots, j_n}=\{ \mathfrak{s}_{i,j_2,\ldots, j_n}\}_{i=0}^{\infty}$ be regular, where $ \mathfrak{s}_{i,j_2,\ldots, j_n}$ be defined by~\eqref{21.1.13sw3}. Then
 $\bold{MP}(\mathbf{s})$ is indeterminate if and only if 
 \begin{equation}\label{20p. 5.4}
 	\sum_{j_2,\ldots, j_n=0}^{\infty} \sum_{j=1}^{\infty}m_j^{(j_2,\ldots, j_n)}(0)<\infty\mbox{ and }
	\sum_{j_2,\ldots, j_n=0}^{\infty} \sum_{j=1}^{\infty}l_j^{(j_2,\ldots, j_n)}<\infty.
\end{equation}

Furthermore, if \eqref{20p. 5.4} holds, then the following limits converge and 
\begin{equation}\label{20p. 5.5}\begin{split}
 \lim _{N_{j_2,\ldots, j_n} \rightarrow\infty} &\cfrac{Q^{+(j_2,\ldots, j_n)}_{2N_{j_2,\ldots, j_n}-1}(z_1)\tau_{j_2,\ldots, j_n}(z_1)\!+\!Q^{+(j_2,\ldots, j_n)}_{2N_{j_2,\ldots, j_n}}(z_1)}{P^{+(j_2,\ldots, j_n)}_{2N_{j_2,\ldots, j_n}-1}(z_1)\tau_{j_2,\ldots, j_n}(z_1)\!+\!P^{+(j_2,\ldots, j_n)}_{2N_{j_2,\ldots, j_n}}(z_1)}=\\&=\cfrac{w^{+(j_2,\ldots, j_n)}_{11}(z_1)\tau_{j_2,\ldots, j_n}(z_1)\!+\!w^{+(j_2,\ldots, j_n)}_{12}(z_1)}{w^{+(j_2,\ldots, j_n)}_{21}(z_1)\tau_{j_2,\ldots, j_n}(z_1)\!+\!w^{+(j_2,\ldots, j_n)}_{22}(z_1)}.\end{split}
\end{equation}
Any solution of  $\bold{MP}(\mathbf{s})$ can be represented as 
\begin{equation}\label{20p. 5.6}
    	 F(z_1,z_2,...,z_n)=
	 \sum_{j_2,\ldots, j_n=0}^{\infty} \cfrac{w^{+(j_2,\ldots, j_n)}_{11}(z_1)\tau_{j_2,\ldots, j_n}(z_1)\!+\!w^{+(j_2,\ldots, j_n)}_{12}(z_1)}{ \prod\limits_{i =2}^n\!\!z_i^{j_i+1}\!\!\left(w^{+(j_2,\ldots, j_n)}_{21}\!(z_1)\tau_{j_2,\ldots, j_n}(z_1)\!+\!w^{+(j_2,\ldots, j_n)}_{22}\!(z_1)\right)},
\end{equation}
where the parameters $\tau_{j_2,\ldots, j_n}(z_1)=o(1)$.
 \end{theorem}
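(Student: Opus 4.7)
The plan is to reduce $\mathbf{MP}(\mathbf{s})$ to countably many one-dimensional full Stieltjes moment problems through the decomposition \eqref{20p. 5.2}--\eqref{20p. 5.3}, and then invoke the classical convergence theory of \textbf{S}-fractions for each coordinate subproblem. By \eqref{20p. 5.2}, a function $F$ solves $\mathbf{MP}(\mathbf{s})$ if and only if each $F_{j_2,\ldots,j_n}$ is a solution of the full one-dimensional problem associated with the regular sequence $\mathfrak{s}_{j_2,\ldots,j_n}$, and the assembled series reproduces the asymptotic expansion \eqref{20p. 5.1}. Thus the multidimensional problem decouples into infinitely many 1D problems, coupled only through the global requirement that the series in $(z_2,\ldots,z_n)$ converge.

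Next, I would apply Theorem~\ref{20p.th.4.3} to the truncations of each $F_{j_2,\ldots,j_n}$ at level $2\mathfrak{n}-1$. This represents the truncated solutions as Möbius transformations of $\tau_{j_2,\ldots,j_n}$ with coefficient matrix
\[
W_{\mathfrak n}^{(j_2,\ldots,j_n)}=\begin{pmatrix} Q^{+(j_2,\ldots,j_n)}_{2N_{j_2,\ldots,j_n}-1}(z_1) & Q^{+(j_2,\ldots,j_n)}_{2N_{j_2,\ldots,j_n}}(z_1) \\[2pt] P^{+(j_2,\ldots,j_n)}_{2N_{j_2,\ldots,j_n}-1}(z_1) & P^{+(j_2,\ldots,j_n)}_{2N_{j_2,\ldots,j_n}}(z_1) \end{pmatrix}.
\]
By the classical Stieltjes criterion, the regular one-dimensional problem is indeterminate if and only if both $\sum_{j}m_j^{(j_2,\ldots,j_n)}(0)$ and $\sum_{j}l_j^{(j_2,\ldots,j_n)}$ converge; in that case the entries of $W_{\mathfrak n}^{(j_2,\ldots,j_n)}$ converge locally uniformly on $\mathbb{C}\setminus\mathbb{R}_+$ to entire functions $w^{+(j_2,\ldots,j_n)}_{ij}$, yielding \eqref{20p. 5.5}. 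The Möbius form then identifies each limit solution as the fractional linear expression in $\tau_{j_2,\ldots,j_n}=o(1)$ which appears inside the sum in \eqref{20p. 5.6}.

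To pass from this componentwise analysis to the multidimensional statement, I would estimate the entries of $W_{\mathfrak n}^{(j_2,\ldots,j_n)}$ on Stolz regions in terms of the products $\prod_j(1+|m_j^{(j_2,\ldots,j_n)}(0)|)(1+|l_j^{(j_2,\ldots,j_n)}|)$, so that the joint summability \eqref{20p. 5.4} controls the family $\{F_{j_2,\ldots,j_n}\}$ uniformly in $(j_2,\ldots,j_n)$. This gives the following two-way argument: if \eqref{20p. 5.4} holds, then the double sum in \eqref{20p. 5.6} converges locally uniformly for $z_i\widehat{\to}\infty$, every choice of parameters $\tau_{j_2,\ldots,j_n}=o(1)$ produces a solution, and $\mathbf{MP}(\mathbf{s})$ is indeterminate; conversely, indeterminacy of $\mathbf{MP}(\mathbf{s})$ forces at least one $F_{j_2,\ldots,j_n}$ to admit a nontrivial parameter, and a Weierstrass-type compactness argument together with the asymptotic matching at $z_i\widehat{\to}\infty$ forces the global series \eqref{20p. 5.4} to converge.

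The main obstacle is the last implication. While the classical 1D Stieltjes theorem immediately handles each coordinate, I have to show that an \emph{a priori} multidimensional solution produces summable data \emph{jointly} in the multi-index $(j_2,\ldots,j_n)$. I expect to prove this by extracting the Laurent-type coefficients of $F$ in $z_2,\ldots,z_n$ via contour integrals over large tori $|z_i|=R_i$, identifying these coefficients with the individual $F_{j_2,\ldots,j_n}$, and then using the uniform Stolz-domain estimate to push the classical one-dimensional summability into a bound that is summable in $(j_2,\ldots,j_n)$. Once this uniform estimate is in place, \eqref{20p. 5.5} and \eqref{20p. 5.6} follow by substituting the limits of $W_{\mathfrak n}^{(j_2,\ldots,j_n)}$ into the decomposition \eqref{20p. 5.2}.
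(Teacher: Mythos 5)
Your overall reduction is the same as the paper's: decompose $F$ via \eqref{20p. 5.2}--\eqref{20p. 5.3}, observe that each $F_{j_2,\ldots, j_n}$ solves the full one-dimensional problem for the regular sequence $\mathfrak s_{j_2,\ldots, j_n}$, quote \cite{DK15} for the \textbf{S}-fraction representation and \cite{DK20} for the one-dimensional indeterminacy criterion $\sum_j m_j^{(j_2,\ldots, j_n)}(0)<\infty$, $\sum_j l_j^{(j_2,\ldots, j_n)}<\infty$ together with the limit of the Stieltjes polynomials giving \eqref{20p. 5.5}, and then substitute the resulting M\"obius parametrization into the decomposition to get \eqref{20p. 5.6}. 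Up to this point your proposal and the paper coincide.

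The genuine gap is in your converse step. You read \eqref{20p. 5.4} as joint summability over the multi-index $(j_2,\ldots,j_n)$ and propose to derive it from indeterminacy of $\bold{MP}(\mathbf{s})$ by recovering the $F_{j_2,\ldots,j_n}$ through contour integrals over large tori and a ``uniform Stolz-domain estimate.'' This does not work as described: $F$ is only constrained by an asymptotic expansion in Stolz domains, not by a convergent Laurent series, so the torus integrals are not available; and, more fundamentally, the atoms $m_j^{(j_2,\ldots, j_n)}(0)$ and $l_j^{(j_2,\ldots, j_n)}$ are computed from the moment data alone (via \eqref{20p.4.3} and \eqref{20p.3.3}), so no compactness or normal-family argument about the set of solutions can produce convergence of the sum over $(j_2,\ldots,j_n)$: indeterminacy of the decoupled one-dimensional problems yields only finiteness of each inner sum, one multi-index at a time. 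Note that the paper's own proof does not attempt your stronger claim; it identifies indeterminacy of $\bold{MP}(\mathbf{s})$ with indeterminacy of all the one-dimensional problems $\bold{MP}(\mathfrak s_{j_2,\ldots, j_n})$ and reads \eqref{20p. 5.4} componentwise through condition \eqref{20p. 5.8}. So your forward direction matches the paper, but the step you yourself flag as the main obstacle --- joint summability from indeterminacy --- is left unproved, and the sketched mechanism cannot close it.
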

  
 \begin{proof} Let the assumptions of  theorem hold. The proof is based on the results of \cite{DK20}. Any solution of the full problem $\bold{MP}(\mathbf{s})$ takes the form \eqref{20p. 5.2}--\eqref{20p. 5.3}. Hence, $F_{j_2,\ldots, j_n}$ is a solution of the one-dimensional full probelem  $\bold{MP}(\mathfrak s_{j_2,\ldots, j_n})$, where $\mathfrak s_{j_2,\ldots, j_n}=\{ \mathfrak{s}_{i,j_2,\ldots, j_n}\}_{i=0}^{\infty}$ is regular sequence and  defined by~\eqref{21.1.13sw3}. In this case, the proof is based on the results of \cite{DK15} and \cite{DK20}. First of all, we get representation for $F_{j_2,\ldots, j_n}$. By \cite{DK15}, we obtain 
 \begin{equation}\label{20p. 5.7}
 F_{j_2,\ldots, j_n}(z_1)=   \frac{1}{\displaystyle\! -z_1 m^{(j_2,\ldots, j_n)}_{1}(z_1)\!+\!\frac{1}{\displaystyle
    l^{(j_2,\ldots, j_n)}_{1}\!+\ldots}},
 \end{equation}
 where $\left(m^{(j_2,\ldots, j_n)}_{j}, l^{(j_2,\ldots, j_n)}_{j}\right)$ can be calculated by~\eqref{20p.4.3}. Moreover, by \cite{DK20},  $\bold{MP}(\mathfrak s_{j_2,\ldots, j_n})$ is indeterminate if and only if 
  \begin{equation}\label{20p. 5.8}
\sum_{j=1}^{\infty}m_j^{(j_2,\ldots, j_n)}(0)<\infty\mbox{ and }
	\sum_{j=1}^{\infty}l_j^{(j_2,\ldots, j_n)}<\infty.
  \end{equation}
 Then the limit \eqref{20p. 5.8} exists and 
 \begin{equation}\label{20p. 5.9}
 F_{j_2,\ldots, j_n}(z_1)=\cfrac{w^{+(j_2,\ldots, j_n)}_{11}(z_1)\tau_{j_2,\ldots, j_n}(z_1)\!+\!w^{+(j_2,\ldots, j_n)}_{12}(z_1)}{w^{+(j_2,\ldots, j_n)}_{21}\!(z_1)\tau_{j_2,\ldots, j_n}(z_1)\!+\!w^{+(j_2,\ldots, j_n)}_{22}\!(z_1)}.
  \end{equation}
 
 Therefore, $\bold{MP}(\mathbf{s})$  is indeterminate if and only if all problems $\bold{MP}(\mathfrak s_{j_2,\ldots, j_n})$ are indeterminate, i.e.  \eqref{20p. 5.4} holds. Substituting \eqref{20p. 5.8} into \eqref{20p. 5.2}, we obtain \eqref{20p. 5.6}. This completes the proof.~\end{proof}

\textbf{Acknowledgements} The author is grateful to Prof. K. Schmüdgen for introducing him to the multidimensional problem of moments.

 
 
 
 
 
 
 
 
 
 

 
 
 
 

\end{document}